\DeclarePairedDelimiter\floor{\lfloor}{\rfloor}
\numberwithin{equation}{section}
\newcommand{\N}{\mathbb{N}}
\newcommand{\Z}{\mathbb{Z}}
\newcommand{\R}{\mathbb{R}}
\newtheorem{theorem}{Theorem}[section]
\newtheorem{lemma}[theorem]{Lemma}
\theoremstyle{definition}
\newtheorem{definition}[theorem]{Definition} 
\newcommand\conv{\operatorname{conv}}
\newcommand\ver{\operatorname{vert}}
\newcommand{\fset}{\mathcal{F}}
\newcommand\PiFset[3]{\Pi_{#2}(\fset^{#1}_{#3})}
\begin{document}

\title{Characterizing face and flag vector pairs for polytopes}

\author[Sj\"oberg]{Hannah Sj\"oberg} 
\address{Institute of Mathematics, Freie Universität Berlin, Arnimallee 2, 14195 Berlin, Germany}
\email{sjoberg@math.fu-berlin.de}

\author[Ziegler]{G\"unter M. Ziegler} 
\address{Institute of Mathematics, Freie Universität Berlin, Arnimallee 2, 14195 Berlin, Germany}
\email{ziegler@math.fu-berlin.de}

\date{October 18, 2018}
\thanks{In memory of Branko Grünbaum (1929--2018)}

\begin{abstract}\noindent%
Grünbaum, Barnette, and Reay in 1974 completed the characterization of the
pairs $(f_i,f_j)$ of face numbers of $4$-dimensional polytopes. 

Here we obtain a complete characterization of the pairs of flag numbers $(f_0,f_{03})$
for $4$-polytopes. Furthermore, we describe the pairs of face numbers $(f_0,f_{d-1})$ 
for $d$-polytopes; this description is complete for even $d\ge6$ except for finitely many 
exceptional pairs  that are ``small'' in a well-defined sense, while for odd $d$ 
we show that there are also ``large'' exceptional pairs.

Our proofs rely on the insight that ``small'' pairs need to be defined and to be treated separately;
in the $4$-dimensional case, these may be characterized with the help of the
characterizations of the $4$-polytopes with at most $8$ vertices by Altshuler and Steinberg (1984).
\end{abstract}

\maketitle

\section{Introduction}

For a $d$-dimensional polytope $P$, let $f_i=f_i(P)$ 
denote the number of $i$-dimensional faces of~$P$,
and for $S \subseteq \{0, \dots ,d-1\}$,
let $f_S=f_S(P)$ denote the number of chains $F_1\subset\dots\subset F_r$
of faces of $P$ with $\{ \dim F_1,\dots, \dim F_r \} = S$.
The \emph{$f$-vector}  of $P$ is then $(f_0, f_1, \dots ,f_{d-1})$, and 
the \emph{flag vector} of $P$ is $(f_S)_{S \subseteq \{0, \dots , d-1\}}$.
The set of all $f$-vectors of $d$-polytopes is $\fset^d\subset\Z^d$.
(Due to the Euler equation, this set lies on a hyperplane in $\R^d$; it spans
this hyperplane by Grünbaum~\cite[Sect.~8.1]{Grunbaum}.)

While the $f$-vector set $\fset^3$ of $3$-polytopes was characterized (easily) by Steinitz~\cite{Steinitz3} in~1906,
a complete characterization of $\fset^d$ is out of reach for any $d\ge4$. 
For $d=4$, the projections 
of the $f$-vector set $\fset^4\subset\Z^4$ onto two of the four coordinates
have been determined in 1967--1974 by Grünbaum~\cite[Sect.~10.4]{Grunbaum}, Barnette--Reay~\cite{BarnetteReay}
and Barnette~\cite{Barnette}. We will review these results in Section~\ref{sec:facepairs}.
{(Note that a complete characterization of the larger set of flag $f$-vectors of regular CW 3-spheres not necessarily
satisfying the intersection property---so their face posets need not be lattices, in which case they
are not polytopal---was provided by Murai and Nevo \cite[Cor.~3.5]{MuraiNevo14}.)
}

This paper provides new results about coordinate projections of $f$-vector and flag vector sets:
The first part is an extension to the flag vectors of $4$-polytopes. 
In particular, in Theorem~\ref{main_thm1} we fully
characterize the projection of the set of 
all flag vectors of $4$-polytopes to the two coordinates 
$f_0$ and $f_{03}$.
Our proof makes use of the classification of all combinatorial types of $4$-polytopes with up to 
eight vertices by Altshuler and Steinberg~\cite{AS84,AS85}. We have not used the classification
of the $4$-polytopes with nine vertices recently provided by Firsching~\cite{Firsching}.

In the second part we look at the set $\fset^d$ of $f$-vectors of $d$-dimensional polytopes, for $d\ge5$.
Here even a complete characterization of the projection $\PiFset{d}{0,d-1}{}\subset\Z^2$
to the coordinates $f_0$ and $f_{d-1}$ seems impossible. 
We call $(n,m)$ a \emph{polytopal pair} if $(n,m)\in\PiFset{d}{0,d-1}{}$, that
is, if there is a $d$-polytope $P$ with $f_0(P)=n$ and $f_{d-1}(P)=m$.
These polytopal pairs must satisfy the \emph{UBT inequality} 
$m\le f_{d-1}(C_d(n))$ given by the Upper Bound Theorem~\cite{McMullen}~\cite[Sect.~8.4]{Ziegler},
where $C_d(n)$ denotes a $d$-dimensional cyclic polytope with $n$ vertices, and also 
$n\le f_{d-1}(C_d(m))$, by duality.

Our second main result, Theorem~\ref{thm:f0fd-1_evendim}, states that for even $d\ge4$,
every $(n,m)$ satisfying the UBT inequalities as well as $n+m\ge\binom{3d+1}{\floor{d/2}}$ is a polytopal pair.
However, for even $d\ge6$, there are pairs $(n,m)$ with  $n+m < \binom{3d+1}{\floor{d/2}}$ that satisfy the UBT inequalities, but for which there is no polytope: We call these \emph{small exceptional pairs}.
Theorem~\ref{thm:f0fd-1_odddim} states, in contrast, that for every odd $d\ge5$ there are also
\emph{arbitrarily large exceptional pairs}.

\section{Face and flag vector pairs for $4$-polytopes}\label{sec:4polytopes}
\subsection{Face vector pairs for $4$-polytopes}\label{sec:facepairs}
The $2$-dimensional coordinate projections $\PiFset{4}{i,j}{}$ of the set of $f$-vectors of $4$-polytopes 
to the coordinate planes, as determined by Grünbaum, Barnette and Reay, are given
by the following theorems. See also Figure~\ref{plot:fvec}.

\begin{theorem}[Grünbaum {\cite[Thm.~10.4.1]{Grunbaum}}]\label{thm:f0f3}
The set of $f$-vector pairs $(f_0,f_3)$ of $4$-polytopes is equal to
\begin{align*}
\PiFset{4}{0,3}{}=\{(f_0,f_3)\in \Z^2:
5 &\le f_0 \le \tfrac{1}{2} f_3(f_3 - 3),\\
5 &\le f_3 \le \tfrac{1}{2} f_0(f_0 - 3)\}.
\end{align*}
\end{theorem}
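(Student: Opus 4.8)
I would prove the two inclusions separately. The inclusion of $\PiFset{4}{0,3}{}$ in the set on the right is the easy one: every $4$-polytope has at least $5$ vertices and at least $5$ facets, so $f_0\ge5$ and $f_3\ge5$; the Upper Bound Theorem gives $f_3\le f_3(C_4(f_0))=\tfrac12 f_0(f_0-3)$; and applying the Upper Bound Theorem to the dual polytope, which has $f_3$ vertices and $f_0$ facets, gives $f_0\le\tfrac12 f_3(f_3-3)$. Thus every $f$-vector pair of a $4$-polytope lies in the claimed set. The reverse inclusion is the real content.

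For the reverse inclusion I have to produce, for each integer pair $(n,m)$ in the set, a $4$-polytope with $f_0=n$ and $f_3=m$. The set is symmetric under $(n,m)\leftrightarrow(m,n)$, and for $n\ge5$ one has $\tfrac12 n(n-3)\ge n$, so the inequality $n\le\tfrac12 m(m-3)$ holds automatically once $5\le n\le m$; by polarity it therefore suffices to realize every pair with $5\le n\le m\le\tfrac12 n(n-3)$. I would treat each such column (fixed $n$, varying $m$) by splitting it at $m=2n-5$. The lower part $n\le m\le 2n-5$ is handled by pyramids: a pyramid over a $3$-polytope $Q$ with $v$ vertices and $g$ facets has $f_0=v+1$ and $f_3=g+1$, and by Steinitz's characterization of $\fset^3$ the pairs $(v,g)$ with $4\le v\le g\le 2v-4$ are exactly the $f$-vector pairs of $3$-polytopes, so these pyramids realize precisely the pairs with $5\le n\le m\le 2n-5$ (in particular all diagonal pairs $(n,n)$, taking $Q$ self-dual).

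For the upper part $2n-5\le m\le\tfrac12 n(n-3)$ the top pair $(n,\tfrac12 n(n-3))$ is the cyclic polytope $C_4(n)$, and I would descend from it with the move that, given two adjacent tetrahedral facets $\conv\{a,b,c,d_1\}$ and $\conv\{a,b,c,d_2\}$ of a polytope, deforms the realization so that the five vertices $a,b,c,d_1,d_2$ come to lie in a common hyperplane while every other face is left intact: this merges the two facets into the single triangular bipyramid they span, so $f_3$ drops by $1$ and $f_0$ is unchanged. To keep the required deformations small I would not iterate this on $C_4(n)$ alone but combine it with stacking extra vertices onto simplex facets of the smaller cyclic polytopes $C_4(k)$ (which gives $f_0=n$ and $f_3=\tfrac12 k(k-3)+3(n-k)$), so that only the short gap of size $k-5$ between two consecutive values of $k$ has to be bridged by merges. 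This sweeps out the upper part, and the lower and upper parts overlap because $2n-5\le\tfrac12 n(n-3)$ for $n\ge5$, so no value of $m$ is skipped; dualizing then finishes the range $m\le n$.

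The step I expect to cost the most work is the upper range, because it is exactly the part where the two ``cheap'' moves fail: stacking a vertex on a simplex facet changes $(f_0,f_3)$ by $(1,3)$ and truncating a simple vertex changes it by $(3,1)$, so both move $f_3$ in jumps and neither can reach every admissible $m$. The $f_3$-decreasing merge move fills the gaps, but its geometric realizability is not automatic and must be proved carefully --- for a single merge by a direct small perturbation of a realization of the cyclic (or stacked) polytope, and for several merges by performing them one at a time and checking that at each stage the required coplanar position is still attainable without destroying the rest of the polytope, or alternatively by writing down explicit interpolating families. What then remains is bookkeeping: verifying that the induction, anchored at the cyclic polytopes, the pyramids, and a short explicit list of small polytopes near the corner $(5,5)$, really does reach every lattice point of this two-dimensional region.
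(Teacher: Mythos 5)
Your necessity argument and your treatment of the range $n\le m\le 2n-5$ by pyramids over $3$-polytopes are fine, and the overall architecture (reduce to $n\le m$ by duality, anchor at the cyclic polytopes, interpolate between the stacked values $\tfrac12 k(k-3)+3(n-k)$ for consecutive $k$) is exactly the shape of Grünbaum's proof that the paper follows. The genuine gap is the move you use to bridge the gaps of size $k-5$: deforming a realization so that the five vertices of two adjacent tetrahedral facets become coplanar ``while every other face is left intact.'' You flag this as the costly step, but it is not a detail that can be pushed through by a perturbation argument. The natural way to realize such a merge is to move a single apex, say $d_1$, into the hyperplane spanned by the other tetrahedron, and --- as with the inverse operation of splitting a bipyramidal facet in Section~\ref{sec:bipyramid} --- this controls the combinatorics only when the moved vertex is \emph{simple}. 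In $C_4(n)$ with $n\ge 6$ every vertex lies on $2(n-3)>4$ facets, so moving $d_1$ disturbs many other facets and nothing guarantees that the intermediate bodies remain polytopes of the intended combinatorial type; and since there is no Steinitz-type realization theorem in dimension $4$, you cannot instead argue that the merged boundary complex is automatically polytopal. So the step carrying the entire upper range $2n-5\le m\le\tfrac12 n(n-3)$ is unsupported.

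The paper (following Grünbaum) fills exactly the same gaps with an \emph{additive} construction that avoids this issue: the polytopes $R_i(n)$ of Section~\ref{sec:genstacking}, obtained by placing one new point beyond $i$ facets of $C_4(n)$ that share a universal edge. This is a pure beyond/beneath construction, so realizability is immediate, and it gives $f_0=n+1$ and $f_3=\tfrac12 n(n-3)+i+2$ for $1\le i\le n-3$; these runs of consecutive facet numbers concatenate across successive $n$ and, combined with ordinary stacking on simplex facets, sweep out the whole strip $3n-10\le m\le\tfrac12 n(n-3)$ with no merging needed, while stacking on the pyramids covers $\tfrac12(n+5)\le m\le 3n-10$. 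If you replace your merge move by the $R_i(n)$ construction (or otherwise prove realizability of the merges), your argument closes up.
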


\begin{theorem}[Grünbaum {\cite[Thm.~10.4.2]{Grunbaum}}] \label{thm:f0f1}
The set of $f$-vector pairs $(f_0,f_1)$ of $4$-polytopes is equal to
\begin{align*}
\PiFset{4}{0,1}{}= \ \{&(f_0,f_1)\in \Z^2:
10 \le 2f_0 \le f_1 \le \tfrac{1}{2} f_0(f_0 - 1)\}\\
&\backslash \{(6,12),(7,14),(8,17), (10,20)\}.
\end{align*}
\end{theorem}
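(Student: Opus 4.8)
We would establish Theorem~\ref{thm:f0f1} in the two usual directions: that every $4$-polytope yields a pair in the displayed set, and that every such pair is realized.

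\textit{Necessity.} Three of the four constraints are immediate. A $4$-polytope has at least $5$ vertices, so $2f_0\ge 10$; the vertex figure at any vertex $v$ is a $3$-polytope, hence has at least four vertices, so $\deg v\ge 4$, and $2f_1=\sum_v\deg v\ge 4f_0$; and $f_1\le\binom{f_0}{2}$ because an edge is a pair of vertices. The content is to exclude the four small pairs. For $(6,12)$, $(7,14)$ and $(10,20)$ one has $f_1=2f_0$, which forces $\deg v=4$ at every vertex, i.e.\ $P$ is simple; dually $P^{*}$ is a simplicial $4$-polytope with $f_0(P)$ facets, and Barnette's Lower Bound Theorem ($f_3\ge 3f_0-10$ for simplicial $4$-polytopes), together with the elementary list of simplicial $4$-polytopes with at most six vertices, shows that no simplicial $4$-polytope has exactly $6$, $7$, or $10$ facets. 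For $(8,17)$ the degree sequence must be $(4^{7},6)$ or $(4^{6},5^{2})$, and ruling both out requires a closer analysis of $4$-polytopes with eight vertices; this is where the Altshuler--Steinberg classification \cite{AS84,AS85} enters, and it is exactly the phenomenon the present paper isolates: these pairs resist a uniform inequality and must be decided by enumeration.

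\textit{Sufficiency --- boundary curves.} On the lower curve $m=2n$ a realizing polytope must be simple. Starting from $\Delta_4$ and alternately truncating a vertex (which preserves simplicity and raises $f_0$ by $3$) or truncating an edge (which preserves simplicity and raises $f_0$ by $4$) realizes every $n\ge 5$ with $n\notin\{6,7,10\}$, since $3$ and $4$ generate every integer except $1$, $2$, $5$. On the upper curve $m=\binom n2$ the cyclic polytope $C_4(n)$ is $2$-neighborly and has exactly $\binom n2$ edges.

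\textit{Sufficiency --- interior.} For the remaining pairs we splice together a few explicit families using three kinds of moves: stacking a vertex onto a facet $F$, which acts by $(f_0,f_1)\mapsto(f_0+1,\,f_1+f_0(F))$; adding a vertex beyond a connected union of facets spanning $v$ vertices, which acts by $(f_0,f_1)\mapsto(f_0+1,\,f_1+v)$; and the building blocks $\operatorname{pyr}(Q)$ and $\operatorname{bipyr}(Q)$ with $Q$ a $3$-polytope on a prescribed number of vertices, which by Steinitz realizes every edge count in $[\ceil{3f_0(Q)/2},\,3f_0(Q)-6]$ and so already sweeps out wide bands of the region. The key observation is that once a polytope has a triangular-prism facet, stacking on it produces square-pyramid facets, after which the increments $4$ and $5$ both remain available at every further step; this lets one run through \emph{consecutive} values of $f_1$ on an interval at fixed $f_0$, while larger increments --- from larger facets, or from adding a vertex beyond a large connected union of facets of a cyclic polytope --- reach the upper part of the region. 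One then chooses the starting configuration (a simple polytope near the lower curve, a pyramid or bipyramid in the middle, a cyclic polytope near the top) so that, for each fixed $n$, the resulting intervals of attainable $f_1$ overlap and together exhaust $[2n,\binom n2]$, and one checks a finite list of small $n$ (essentially $n\le 8$) by hand against the classifications of $4$-polytopes with at most eight vertices; this same check certifies that the only unattained pairs in the nominal region are the four listed ones. The difficulty here is not any single construction but the bookkeeping that the admissible moves, in all admissible combinations, really do hit \emph{every} lattice point of $\{\,2n\le m\le\binom n2\,\}$ for \emph{every} $n$: pure stacking leaves gaps that widen with the facet sizes used, so one must interleave moves of different increments and verify that no value of $f_1$ is skipped, while simultaneously isolating the genuinely small pairs $(6,12)$, $(7,14)$, $(8,17)$, $(10,20)$, where the region is not saturated and the appeal to the enumeration of small $4$-polytopes cannot be avoided.
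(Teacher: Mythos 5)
The paper does not prove this theorem---it is quoted from Grünbaum with only a one-paragraph indication of method (neighborly polytopes for the upper curve, dual-neighborly/simple polytopes for the lower curve, explicit small examples, and inductive generalized stacking for the interior)---and your sketch follows exactly that strategy, with correct details where you execute them: the three elementary inequalities, the exclusion of $(6,12)$, $(7,14)$, $(10,20)$ via simplicity, duality and the lower bound theorem for simplicial $4$-polytopes, the deferral of $(8,17)$ to enumeration, and the realization of both boundary curves. The parts you explicitly leave open (the interior bookkeeping that the stacking increments hit every lattice point, and the case analysis behind $(8,17)$) are precisely where the substance of Grünbaum's argument lies, so this is a correct outline of the same approach rather than a divergent or complete proof.
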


The existence parts of Theorems~\ref{thm:f0f3} and \ref{thm:f0f1} are proved
by taking neighborly polytopes, which yield the polytopal pairs on the upper bound,
as well as dual neighborly polytopes for the polytopal pairs on the lower bound,
and by finding some polytopes for examples of small polyhedral pairs.
From these polytopes, polytopes with all other possible polytopal pairs 
are constructed by an inductive method of (generalized) stacking 
(see Sections~\ref{sec:stacking} and \ref{sec:genstacking}).
\begin{theorem}[Barnette \& Reay {\cite[Thm.~10]{BarnetteReay}}] \label{thm:f0f2}
The set of $f$-vector pairs $(f_0,f_2)$ of $4$-polytopes is equal to
\begin{align*}
\PiFset{4}{0,2}{}=\{(f_0,f_2)\in \Z^2: \
10\le&\tfrac12(2f_0 + 3 + \sqrt{8f_0 + 9}) \le f_2 \le f_0^2 - 3f_0,\\
&f_2 \ne f_0^2 - 3f_0 -1
\}\\
\backslash \{&(6,12),(6,14),(7,13),(7,15),(8,15),\\
&(8,16),(9,16),(10,17),(11,20), (13,21)\}.
\end{align*}
\end{theorem}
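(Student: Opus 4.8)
The plan is to prove the two inclusions in turn. For the \emph{necessity} of the stated conditions, let $P$ be a $4$-polytope with $f_0=f_0(P)$ and $f_2=f_2(P)$. The Euler relation gives $f_2=f_1+f_3-f_0$, and I would combine this with four bounds: the obvious $f_1\le\binom{f_0}{2}$ and $f_1\ge 2f_0$ (each vertex lies on at least four edges), together with the two inequalities of Theorem~\ref{thm:f0f3}, namely $f_3\le\tfrac12 f_0(f_0-3)=\binom{f_0}{2}-f_0$ and $f_0\le\tfrac12 f_3(f_3-3)$, the latter of which rearranges to $f_3\ge\tfrac12\bigl(3+\sqrt{8f_0+9}\bigr)$. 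Substituting the four bounds into $f_2=f_1+f_3-f_0$ yields exactly $\tfrac12\bigl(2f_0+3+\sqrt{8f_0+9}\bigr)\le f_2\le f_0^2-3f_0$.

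To exclude the value $f_2=f_0^2-3f_0-1$ I would use that $2f_2=\sum_F f_2(F)\ge 4f_3$, summing over the facets $F$ (each a $3$-polytope with at least four $2$-faces), so $f_3\le\tfrac12 f_2$ with equality precisely when $P$ is simplicial; together with $f_1\le\binom{f_0}{2}$ and the Euler relation this gives $\bigl(\binom{f_0}{2}-f_1\bigr)+\bigl(\tfrac12 f_2-f_3\bigr)=\tfrac12\bigl(f_0^2-3f_0-f_2\bigr)$. For $f_2=f_0^2-3f_0$ the left-hand side is $0$, so $P$ is simplicial and every two vertices are joined by an edge, i.e.\ $P$ is neighborly; for $f_2=f_0^2-3f_0-1$ the left-hand side is $\tfrac12$, and since $2f_2-4f_3$ is a nonnegative even integer this forces $f_1=\binom{f_0}{2}$ and $\sum_F\bigl(f_2(F)-4\bigr)=2$, so $P$ would have to be a $2$-neighborly, ``almost simplicial'' $4$-polytope. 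Showing that no such polytope exists is the crux of the necessity argument; I expect it to need a structural analysis of how a $4$-polytope all of whose facets but one or two are simplices cannot have all pairs of vertices adjacent. Finally, the finitely many exceptional pairs are ruled out: for $f_0\le 8$ by going through the classification of all combinatorial types of $4$-polytopes with at most eight vertices of Altshuler and Steinberg~\cite{AS84,AS85}, and for the larger ones (up to $f_0=13$) by combining the inequalities above with classical lower bounds for simplicial $4$-polytopes, applied to the dual, and with the fact just proved that $f_2\ne f_0^2-3f_0-1$ --- for instance $(13,21)$ is impossible because $f_3\ge 8$ would make $P$ simple while a simple $4$-polytope with $13$ vertices has at most $7$ facets, and $f_3=7$ would force $f_1=27$ and hence a dual $4$-polytope with $f_2=f_0^2-3f_0-1$.

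For the \emph{sufficiency}, the pairs on the upper boundary $f_2=f_0^2-3f_0$ are realized for every $f_0\ge 5$ by the neighborly cyclic polytopes $C_4(f_0)$, and the pairs on the lower boundary are realized, whenever $f_0=\tfrac12 m(m-3)$, by the duals of the cyclic polytopes $C_4(m)$, which are simple and satisfy $f_2=\binom{m}{2}=\tfrac12\bigl(2f_0+3+\sqrt{8f_0+9}\bigr)$. All remaining admissible lattice points would then be obtained by induction: starting from these extremal polytopes and from explicit small examples (such as iterated pyramids over $3$-polytopes, whose $f$-vectors are completely known by Steinitz's theorem), one applies stacking onto a simplex facet, which sends $(f_0,f_2)$ to $(f_0+1,f_2+6)$, truncation of a simple vertex, and the ``generalized stacking'' operations --- each of which changes $(f_0,f_2)$ by an explicitly known small vector --- and checks that, for each fixed $f_0$, every admissible value of $f_2$ strictly between the two boundaries, and different from $f_0^2-3f_0-1$, is attained.

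Besides the exclusion of the almost-simplicial $2$-neighborly case on the necessity side, the main obstacle I anticipate is on the sufficiency side: organizing the finite repertoire of construction moves so that it provably covers the whole two-dimensional admissible region, in particular near the lower boundary, where the realizing polytopes are highly constrained and leave very little room to maneuver.
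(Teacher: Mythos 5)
First, note that the paper does not actually prove Theorem~\ref{thm:f0f2}: it is quoted from Barnette and Reay, and the only indication given of its proof is the remark that the existence part runs like those of Theorems~\ref{thm:f0f3} and~\ref{thm:f0f1} (neighborly and dual-neighborly polytopes for the two boundaries, explicit small examples, and inductive (generalized) stacking), ``additionally considering all $4$-dimensional pyramids, bipyramids and prisms.'' Your outline follows exactly this strategy. On the necessity side your argument is essentially sound: the two outer bounds do follow from Euler together with $2f_0\le f_1\le\binom{f_0}{2}$ and the two inequalities of Theorem~\ref{thm:f0f3}, and your identity $\bigl(\tbinom{f_0}{2}-f_1\bigr)+\bigl(\tfrac12 f_2-f_3\bigr)=\tfrac12(f_0^2-3f_0-f_2)$ is the right tool for excluding $f_2=f_0^2-3f_0-1$. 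But the step you flag as the ``crux'' is immediate and you should not leave it open: if $f_1=\binom{f_0}{2}$, then every facet is a $3$-polytope in which every two vertices are joined by an edge, hence a tetrahedron (the paper uses precisely this observation in the proof of Lemma~\ref{thm:exceptions}); so a $2$-neighborly $4$-polytope is automatically simplicial, $\sum_F(f_2(F)-4)=0\ne 2$, and the case is closed in one line.

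The genuine gap is on the sufficiency side. ``One applies stacking, truncation and generalized stacking and checks that every admissible value of $f_2$ is attained'' is not a proof; it is a restatement of the theorem. The two basic moves change $(f_0,f_2)$ by $(1,6)$ and $(3,4)$, which generate a sublattice of index $14$ in $\Z^2$, so no single seed (nor the family of iterated pyramids alone) can reach all admissible pairs; a substantial, explicitly verified stock of starting polytopes is required, above all in the strip just above the lower boundary $f_2=\bigl\lceil\tfrac12(2f_0+3+\sqrt{8f_0+9})\bigr\rceil$, where realizing polytopes are nearly simple and very constrained. This is exactly why, per the paper's remark, Barnette and Reay had to bring in all $4$-dimensional pyramids, bipyramids and prisms, none of which appear in your repertoire. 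Until you exhibit concrete families covering that region for every $f_0$, and separately verify the ten sporadic exceptions (via the Altshuler--Steinberg classification for $f_0\le 8$, and by ad hoc arguments in the spirit of your $(13,21)$ computation for the four pairs with $9\le f_0\le 13$), the equality asserted in the theorem is not established.
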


The existence part of Theorem~\ref{thm:f0f2} is proved similarly to the proofs of Theorems~\ref{thm:f0f3} and \ref{thm:f0f1},
additionally considering all $4$-dimensional pyramids, bipyramids and prisms (``cylinders''). 
\begin{theorem}[Barnette {\cite[Thm.~1]{Barnette}}, with corrections, cf.\ \cite{HoppnerGMZ}] \label{thm:f1f2} 
The set of $f$-vector pairs $(f_1,f_2)$ of $4$-polytopes is equal to
\begin{alignat*}{3}
\PiFset{4}{1,2}{}=\{&(f_1,f_2)\in \Z^2:
&&10 \le \tfrac1 2 f_1 +\Big \lceil\sqrt{f_1+\tfrac 9 4}+\tfrac 1 2\Big \rceil+1\le f_2 ,\\
&&&10 \le \tfrac1 2 f_2 +\Big \lceil\sqrt{f_2+\tfrac 9 4}+\tfrac 1 2\Big \rceil+1\le f_1,\\
&&&f_2\ne \tfrac1 2 f_1 +\sqrt{f_1+\tfrac {13} 4}+2,\\
&&&f_1\ne \tfrac1 2 f_2 +\sqrt{f_2+\tfrac {13} 4}+2\}\\
\backslash\{&(12,12),(13,14),&&(14,13),(14,14),(15,15),\\
&(15,16),(16,15),&&(16,17),(16,18),(17,16),\\
&(17,20),(18,16),&&(18,18),(19,21),(20,17),(20,23),\\
&(20,24),(21,19),&&(21,26),(23,20),(24,20),(26,21)\}.
\end{alignat*}
\end{theorem}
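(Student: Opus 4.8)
The plan is to exploit polar duality, which in dimension $4$ interchanges edges and $2$-faces (an edge of $P$ is a $2$-face of $P^\ast$, and a $2$-face of $P$ is an edge of $P^\ast$); hence $\PiFset{4}{1,2}{}$ is invariant under transposing its two coordinates, which is precisely why the inequalities and the list of exceptional pairs come in dual pairs. It therefore suffices to prove necessity of the ``$f_2$ from below'' inequality and to realize every admissible pair with $f_1\le f_2$, the rest following by duality. For necessity I would combine three facts: the Euler relation $f_2=f_1-f_0+f_3$; the bound $f_0\le\lfloor f_1/2\rfloor$, valid because every vertex of a $4$-polytope lies on at least four edges; and Grünbaum's inequality $f_0\le\tfrac12 f_3(f_3-3)$ from Theorem~\ref{thm:f0f3}, equivalently $f_3\ge\tfrac12\big(3+\sqrt{9+8f_0}\big)$. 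Substituting the latter two into the Euler relation and using that $f_0\mapsto f_1-f_0+\tfrac12\big(3+\sqrt{9+8f_0}\big)$ is decreasing --- so the extremal case is $f_0=\lfloor f_1/2\rfloor$, with the value $f_0=5$ (forcing $P$ to be a simplex) treated separately --- yields after rounding exactly $f_2\ge\tfrac12 f_1+\big\lceil\sqrt{f_1+\tfrac94}+\tfrac12\big\rceil+1$, the bottom value $10$ coming from the simplex itself.

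The ``$\ne$'' conditions need one more, delicate step. When $f_2$ equals $\tfrac12 f_1+\sqrt{f_1+\tfrac{13}4}+2$ --- the first integer strictly above the minimum just obtained --- the chain of inequalities is so nearly tight that $P$ must be simple (or nearly so when $f_1$ is odd) and $P^\ast$ must meet equality in Theorem~\ref{thm:f0f3}; a short structural analysis of this rigid configuration then rules out precisely that value of $f_2$, and the statement forbidding $f_1=\tfrac12 f_2+\sqrt{f_2+\tfrac{13}4}+2$ follows by duality. All of the finitely many exceptional pairs are ``small'': in each of them $f_1$ or $f_2$ is small enough that $f_0\le\lfloor f_1/2\rfloor$ (or the dual bound on $f_3$) confines $P$ to a short, explicitly described family of $4$-polytopes, and non-realizability is verified by inspecting the polytopes with few vertices, respectively few facets, with one half of the list following from the other by duality.

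For the existence part I would follow the scheme used for Theorems~\ref{thm:f0f3}--\ref{thm:f0f2}. The ``lower'' boundary curve $f_2=\tfrac12 f_1+O(\sqrt{f_1})$ is populated by the duals $C_4(k)^\ast$ of cyclic polytopes (which satisfy $f_0=\tfrac12 f_3(f_3-3)$ and hence lie exactly on it); its mirror image $f_1=\tfrac12 f_2+O(\sqrt{f_2})$ by the cyclic polytopes $C_4(k)$ themselves; and a finite stock of further seeds --- the $4$-dimensional pyramids, bipyramids and prisms over $3$-polytopes, whose $(f_1,f_2)$-coordinates are read off from Steinitz's description of $\fset^3$ --- takes care of the small admissible pairs near the diagonal. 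From these seeds, every remaining admissible lattice point should be reached by an induction on $f_1+f_2$ using (generalized) stacking (Sections~\ref{sec:stacking} and~\ref{sec:genstacking}): ordinary stacking on a simplex facet shifts $(f_1,f_2)$ by $(4,6)$, its dual --- truncating a simple vertex --- by $(6,4)$, and generalized stackings over lower-dimensional faces supply the smaller increment vectors needed to land on every point between the two boundary curves.

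The hard part will be the bookkeeping where these pieces meet. One must verify simultaneously that the constructions hit \emph{every} admissible lattice point and that the $22$ listed pairs together with the two families of ``$\ne$''-values are \emph{exactly} the non-realized ones: no admissible point may be missed by the stackings, and no pair may be doubly excluded or wrongly declared realizable. This is subtle precisely along the two boundary curves and for small $f_1,f_2$, where the rigidity argument, the small-case analysis, and the reach of the stacking operations all have to be fitted together without gaps; it is here that the original argument of Barnette~\cite{Barnette} required the corrections recorded in~\cite{HoppnerGMZ}.
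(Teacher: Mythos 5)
Your duality reduction and your necessity argument (Euler's relation combined with $2f_1\ge 4f_0$ and the dual UBT bound $f_0\le\tfrac12f_3(f_3-3)$, minimized over $f_0$) are sound and are essentially the standard derivation. The genuine gap is in the existence part. Note first that the paper deliberately records different methods for Theorems~\ref{thm:f0f3}--\ref{thm:f0f2} (stacking, generalized stacking, plus pyramids/bipyramids/prisms) and for Theorem~\ref{thm:f1f2} (a finite seed set plus \emph{facet splitting}, Section~\ref{sec:facetsplitting}); you have transplanted the former method to the latter theorem, and it cannot work. The obstruction is quantitative: stacking on a simplex facet changes $(f_1,f_2)$ by $(4,6)$ and truncating a simple vertex by $(6,4)$, increments of slope $3/2$ and $2/3$, both bounded away from the slopes $1/2$ and $2$ of the two boundary curves. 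Writing $e=f_2-\tfrac12f_1-\sqrt{f_1+\tfrac94}$ for the ``excess'' over the lower boundary, each truncation increases $e$ by roughly $1$ and each stacking by roughly $4$, so a pair with excess $e=O(1)$ and $f_1$ large has at most $O(1)$ admissible stacking/truncation ancestors, the last of which still has $f_1$ large; it therefore cannot descend from any fixed finite seed stock, and the seeds $C_4(k)^*$ sitting on the boundary occur only at $f_1=k(k-3)$, far too sparsely (gaps of order $\sqrt{f_1}$) to cover the strip between them. This is precisely what facet splitting supplies: splitting a facet of $C_4^*(n)$ by an $i$-gon changes $(f_1,f_2)$ by $(2i,i+1)$, an increment of slope tending to $1/2$, and the variants $\delta_1,\delta_2,\delta_3$ through $1$, $2$ or $3$ vertices adjust the residues; the upper boundary is then handled by duality. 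Your generalized stackings $R_i(n)$ do not substitute for this: they are simplicial (so $f_2=2f_3$ is even) and their duals simple, whereas the strip along the boundary also contains pairs that force polytopes which are neither simple nor simplicial.

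A second, smaller gap concerns the $\ne$-conditions and the $22$ sporadic exceptions. Your plan to settle the sporadic pairs ``by inspecting the polytopes with few vertices'' is not obviously finite in the needed sense: for $(26,21)$ the a priori bounds give only $f_0\le 13$ and $f_3\le 10$, well beyond the Altshuler--Steinberg enumeration (which stops at $8$ vertices) that the paper uses elsewhere. Likewise, ruling out $f_2=\tfrac12f_1+\sqrt{f_1+\tfrac{13}4}+2$ requires a genuine structural argument about polytopes that are simple except at one vertex (dually, have exactly one non-simplex facet), not just the observation that the configuration is ``rigid''. This is exactly the part of Barnette's original proof that contained errors; the list quoted in the theorem already incorporates the corrections of \cite{HoppnerGMZ}. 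So while your outline identifies where the difficulty sits, the two places you defer (``a short structural analysis'' and ``the bookkeeping'') are the substance of the proof, and the inductive engine you propose for the bookkeeping is the wrong one near the boundary curves.
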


For the proof of Theorem~\ref{thm:f1f2}, a finite number of  polytopes with few edges was found, and polytopes 
with all other possible polytopal pairs were constructed using an inductive method based on ``facet splitting''
(see  Section~\ref{sec:facetsplitting}).

The remaining $f$-vector projections are given by duality.

\begin{figure*}[t!]
    \centering
    \begin{subfigure}[t]{0.5\textwidth}
        \centering
        \includegraphics[height=2in]{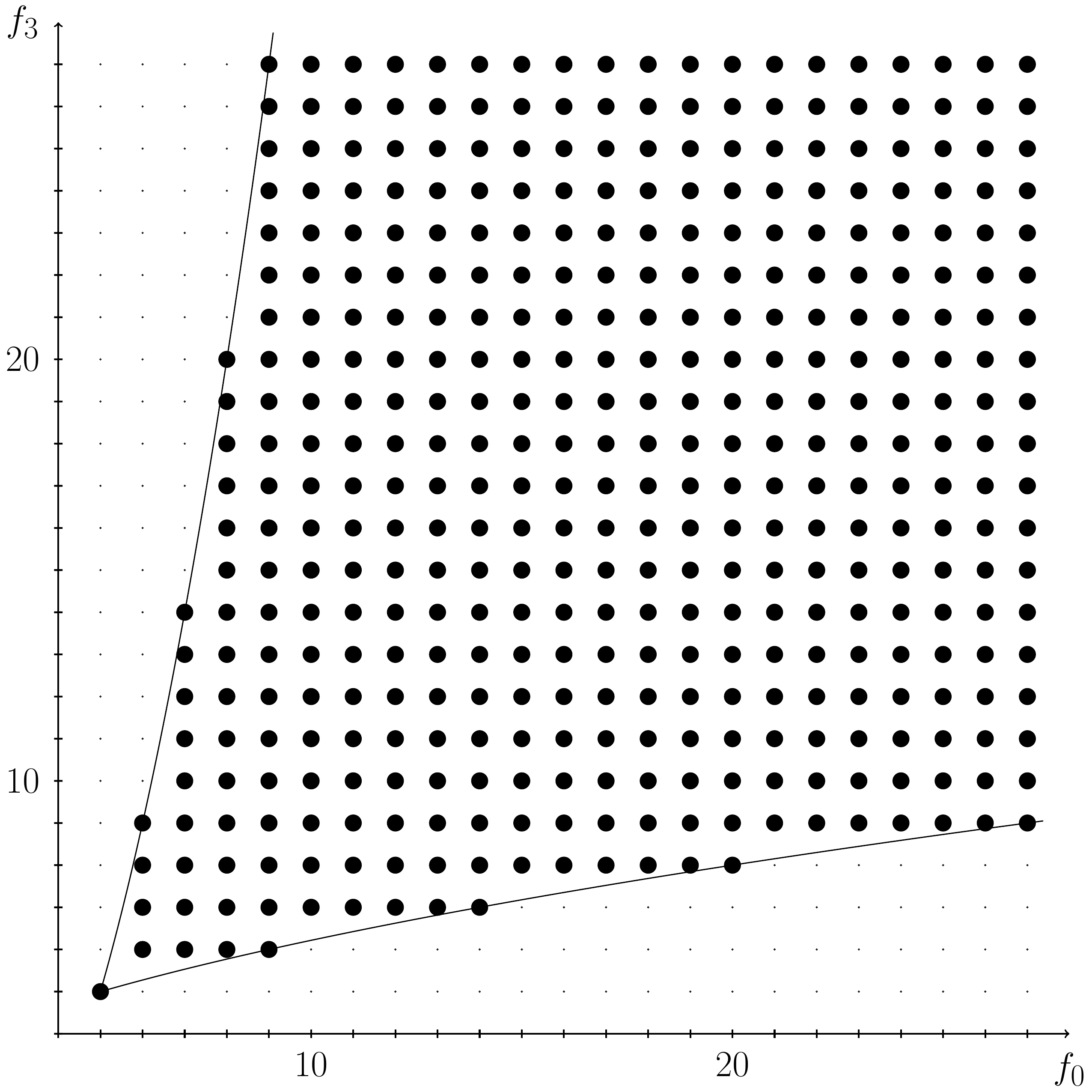}
        \caption{$\PiFset{4}{0,3}{}$}
    \end{subfigure}%
    ~ 
    \begin{subfigure}[t]{0.5\textwidth}
        \centering
        \includegraphics[height=2in]{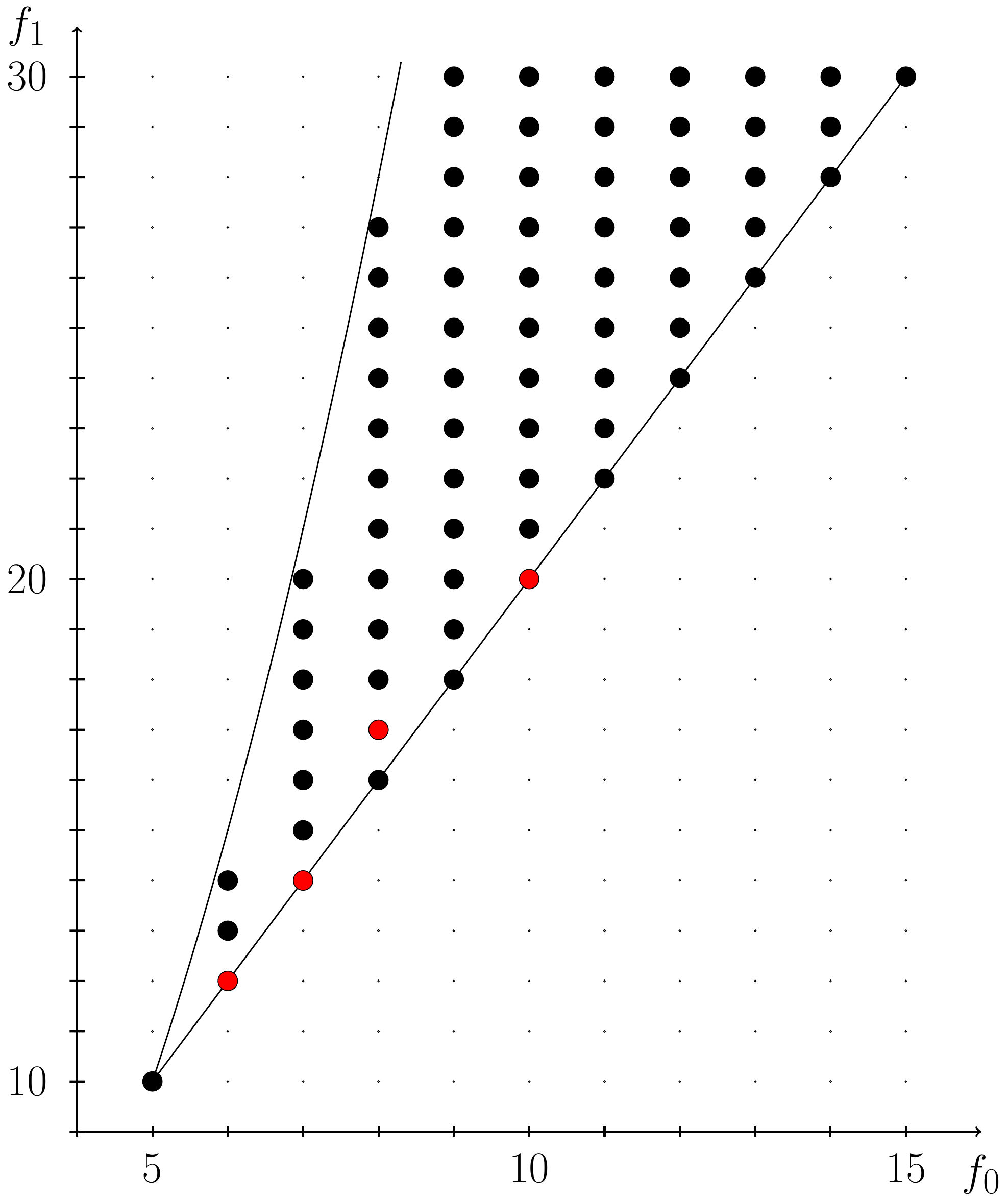}
        \caption{$\PiFset{4}{0,1}{}$}
    \end{subfigure}
    \\
    \begin{subfigure}[t]{0.5\textwidth}
      \centering
      \includegraphics[height=2in]{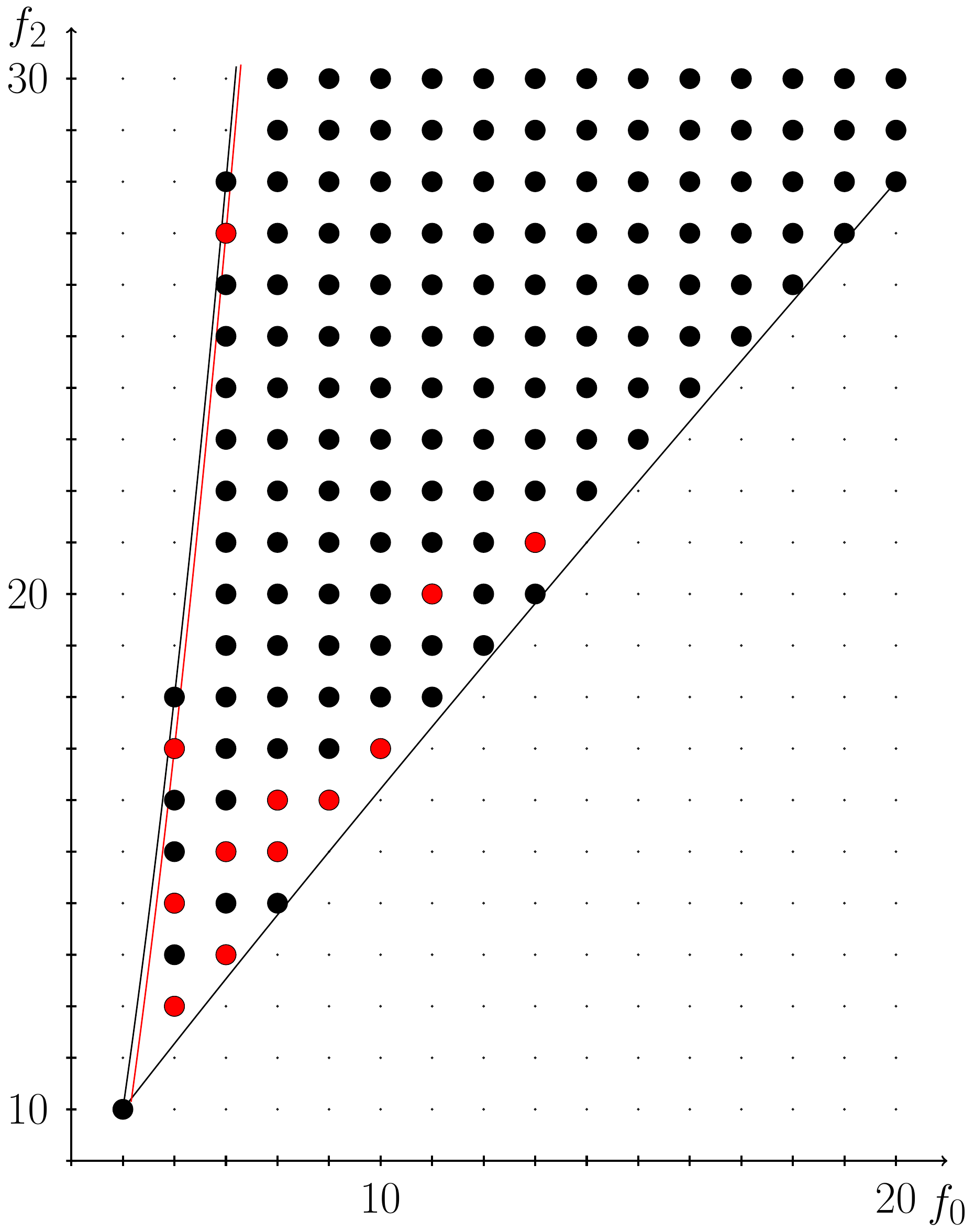}
      \caption{$\PiFset{4}{0,2}{}$}
    \end{subfigure}%
    ~ 
    \begin{subfigure}[t]{0.5\textwidth}
        \centering
        \includegraphics[height=2in]{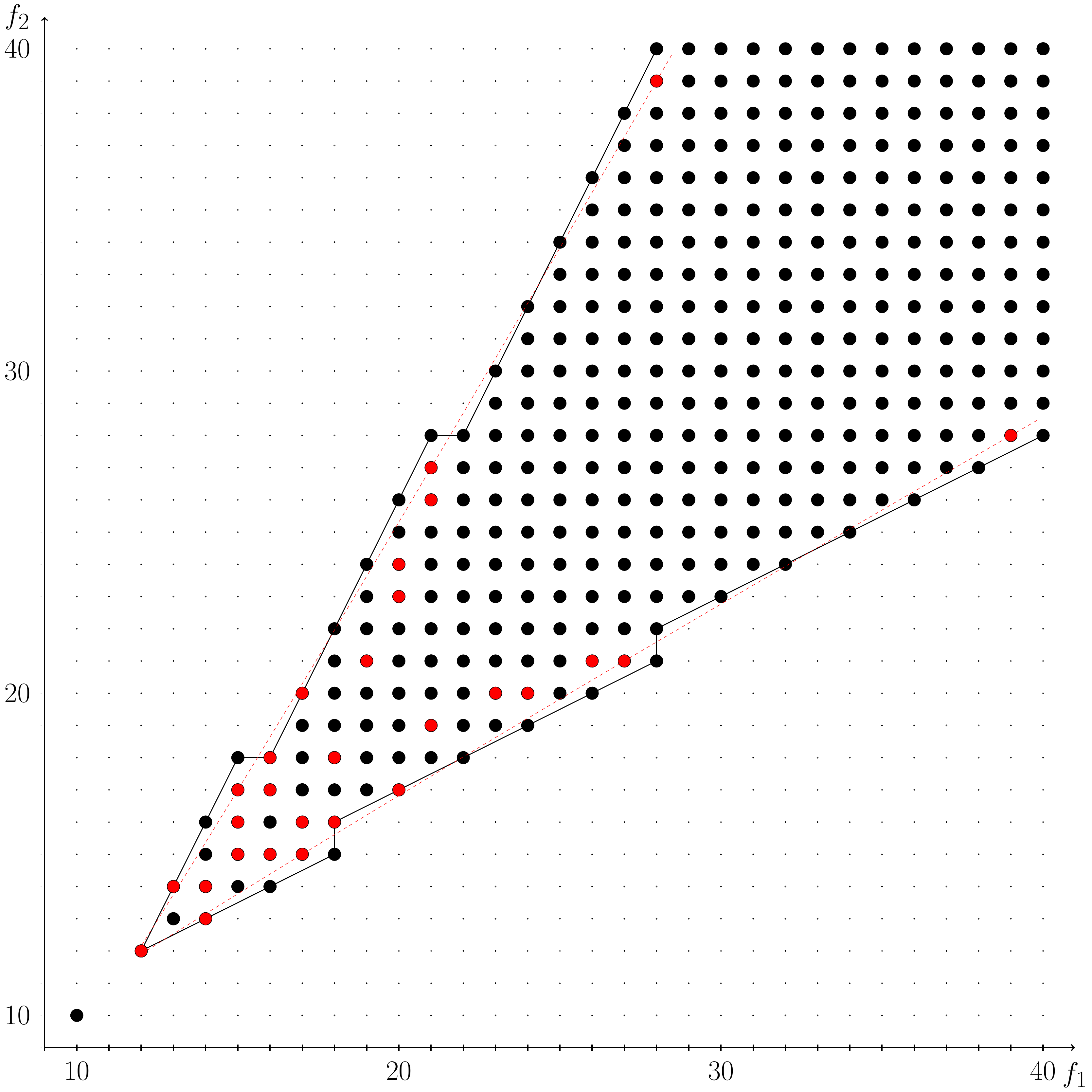}
        \caption{$\PiFset{4}{1,2}{}$}
    \end{subfigure}
    \caption{$f$-vector projections, red dots are exceptional pairs}
\label{plot:fvec}
\end{figure*}
\subsection{Flag vector pair $(f_0,f_{03})$ for $4$-polytopes}
In the following we will characterize the set
\[
	\PiFset{4}{0,03}{}=\{(f_0(P),f_{03}(P)) \in \Z^2 \ \vert \ P \text{ is a $4$-polytope}\},
\]
that is, we describe the possible number of vertex-facet incidences of a $4$-polytope
with a fixed number of vertices.
Equivalently, this tells us the possible average number of facets of the vertex figures, $\frac{f_{03}}{f_0}$,
for a given number $f_0$ of vertices.

 In 1984  Altshuler and Steinberg classified all combinatorial types of 
$4$-polytopes with up to $8$ vertices~\cite{AS84,AS85}.
This classification makes our proof much easier. 
We will use the classification to find examples of polytopes for certain small polytopal pairs
and also to argue that some pairs cannot be polytopal pairs of any $4$-polytope.
The following  is our first main theorem:

 \begin{theorem}\label{main_thm1}
There exists a $4$-polytope $P$ with $f_0(P) = f_0$ and $f_{03}(P) = f_{03}$ 
if and only if $f_0$ and $f_{03}$ are integers satisfying
\begin{align*}
20 \le 4f_0 \le \ &f_{03} \le 2f_0(f_0 - 3),\\
&f_{03} \ne 2f_0(f_0-3)-k \text{ for } k \in \{1,2,3,5,6,9,13\}
\end{align*}
 and $(f_0,f_{03})$ is not one of the $18$ exceptional pairs
\begin{align*}
&(6,24),(6,25),(6,28),\\
& (7,28), (7,30), (7,31),\\ 
&(7,33), (7,34), (7,37), (7,40),\\
&(8,33), (8,34),(8,37), (8,40),\\
& (9,37), (9,40),
(10,40), (10,43).
 \end{align*}
\end{theorem}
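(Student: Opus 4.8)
The plan is to follow the classical scheme used in Theorems~\ref{thm:f0f3}--\ref{thm:f1f2}: first establish necessity of the stated inequalities, then realize all the remaining pairs by explicit constructions, handling ``small'' $f_0$ by direct appeal to the Altshuler--Steinberg classification.

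\medskip
\noindent\textbf{Necessity.}
The lower bound $f_{03}\ge 4f_0$ holds because every vertex of a $4$-polytope lies in at least $4$ facets (its vertex figure is a $3$-polytope, hence has at least $4$ facets), and $4f_0\ge 20$ since $f_0\ge5$. For the upper bound, I would use the incidence count $f_{03}=\sum_{F}f_0(F)$ over facets $F$ together with dual UBT-type estimates: a vertex figure has at most $\binom{f_0-1}{2}-\dots$, but more directly one counts, for each vertex $v$, the number of facets through $v$ as $f_2(P/v)$, the number of $2$-faces of the vertex figure, a $3$-polytope on at most $f_0-1$ vertices, so $f_{03}=\sum_v f_2(P/v)\le f_0\cdot 2(f_0-3)$ by the $3$-dimensional Euler/UBT bound $f_2\le 2f_0^{(3)}-4$ with $f_0^{(3)}\le f_0-1$, giving $f_{03}\le 2f_0(f_0-3)$, with equality exactly for simplicial neighborly polytopes. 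The forbidden ``near-maximal'' values $f_{03}=2f_0(f_0-3)-k$ for $k\in\{1,2,3,5,6,9,13\}$ should come from a finer analysis of which $f$-vectors of $3$-polytopes occur as vertex figures (a gap phenomenon: $f_2=2f_0^{(3)}-5$ is impossible for a $3$-polytope, etc.), combined with the constraint that the vertex figures must be glued consistently. I expect this bookkeeping — translating the admissible multiset of vertex-figure $f$-vectors into admissible values of $f_{03}$ near the top — to be the first delicate point.

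\medskip
\noindent\textbf{Realizing the pairs on and near the boundaries.}
On the upper bound, cyclic (neighborly) $4$-polytopes $C_4(n)$ give $(n,2n(n-3))$ for all $n\ge5$. To fill in slightly smaller $f_{03}$ I would perturb neighborly polytopes or take connected sums / vertex splits that decrease $f_{03}$ in controlled increments; the allowed decrements $k\notin\{1,2,3,5,6,9,13\}$ should be matched by building vertex figures with the corresponding $f_2$. On the lower bound, the pair $(n,4n)$ is attained by the duals of polytopes with $n$ facets all of whose facets are simplices in the appropriate sense — concretely, stacked $4$-polytopes are simplicial with $f_{03}=4f_3$ and $f_0=f_3+\text{(something)}$; dually one gets $f_{03}=4f_0$. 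The bulk of the interior is then covered by an inductive (generalized) stacking argument as in Sections~\ref{sec:stacking}--\ref{sec:genstacking}: stacking a vertex onto a facet $G$ changes $(f_0,f_{03})$ by $(+1,\,+\,3+\text{incidences of }G)$-type increments, so one shows the set of reachable pairs, started from a finite base set, covers everything in the stated region with $f_0$ large. One must verify the increments are flexible enough to hit every residue — this is where the peculiar arithmetic of the exceptional set for small $f_0$ will be felt.

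\medskip
\noindent\textbf{Small cases and the exceptional list.}
For $f_0\le 8$ (and a few pairs with $f_0\in\{9,10\}$ living below where the inductive construction takes over), I would invoke the Altshuler--Steinberg enumeration directly: for each $n\in\{5,6,7,8\}$, list the multiset $\{f_{03}(P):P\text{ a }4\text{-polytope with }f_0(P)=n\}$, which is a finite computation over their tables, and read off exactly which pairs occur; the complement within the region cut out by the inequalities gives precisely the claimed $18$ exceptional pairs (plus confirms the $k$-gaps for small $n$). The pairs with $f_0=9,10$ in the list I would rule out by a combination of: the UBT/dual-UBT inequalities, the vertex-figure gap analysis above, and, if needed, an \emph{ad hoc} argument bounding $f_{03}$ from below in terms of $f_0$ for polytopes that are not simplicial (e.g.\ a polytope with $f_0=9$ and a non-simplex facet must have comparatively many incidences). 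The main obstacle, as in all results of this type, is the matching problem: proving that the explicit constructions' increments, seeded by the finitely many base polytopes (some taken from Altshuler--Steinberg), reach \emph{every} non-exceptional pair and \emph{no} exceptional one — i.e., that the exceptional list is exactly right, not merely an upper bound for what the constructions miss. I would organize this as a careful induction on $f_0$ with an explicit finite verification at the base, checking that once $f_0$ is past a small threshold the generalized-stacking increments suffice and all obstructions have disappeared.
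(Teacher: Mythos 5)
Your overall architecture matches the paper's: incidence-counting for the bounds, Altshuler--Steinberg for $f_0\le 8$, and an induction by stacking/truncating from a finite base set. Your vertex-figure count is in fact a nice elementary route to both key inequalities (summing $f_2(P/v)\le 2\deg(v)-4$ over vertices gives $f_{03}\le 4f_1-4f_0$, which is exactly the inequality the paper extracts from Bayer's flag-vector theorem, and bounding $\deg(v)\le f_0-1$ gives the UBT-type bound $f_{03}\le 2f_0(f_0-3)$). But the proposal has genuine gaps precisely at the two points that carry the content of the theorem. First, the forbidden values $f_{03}=2f_0(f_0-3)-k$, $k\in\{1,2,3,5,6,9,13\}$: your proposed mechanism rests on a gap in the $f$-vectors of $3$-polytopes that does not exist --- $f_2=2f_0-5$ \emph{is} realized (by the square pyramid), so no per-vertex-figure argument can produce these gaps. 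The actual argument first uses $f_{03}\le 4f_1-4f_0$ to force $\binom{f_0}{2}-\tfrac{k}{4}\le f_1<\binom{f_0}{2}$ (disposing of $k=1,2,3$ and pinning down the number of non-edges for the other $k$), and then a global structural analysis: every non-simplex facet must contain a non-edge, which bounds its vertex number and identifies it as a triangular bipyramid, square pyramid, or one of two simplicial $6$-vertex polytopes; a parity argument on $f_{03}\bmod 4$ together with double-counting $f_2$ via $f_{23}$ then eliminates $k=5,6,9,13$. None of this bookkeeping is present in your sketch, and it is not a routine verification.

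Second, the exceptional pairs with $f_0\in\{9,10\}$ cannot be handled by ``UBT/dual-UBT plus vertex-figure gaps plus an ad hoc lower bound on $f_{03}$ in terms of $f_0$.'' The step you are missing is Bayer's \emph{other} rigidity inequality, which in the $(f_0,f_3,f_{03})$ coordinates reads $f_{03}\ge 3f_0+3f_3-10$; for $(9,37),(9,40),(10,40),(10,43)$ it forces $f_3\le 7$, and then dualizing lands you back in the $7$-vertex range already settled by the classification (using that $(7,43)$ falls under the $k=13$ gap). Without some inequality coupling $f_{03}$ to \emph{both} $f_0$ and $f_3$, these four pairs are not excluded. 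Finally, on the construction side, ``perturb neighborly polytopes'' is not an argument: to realize every admissible value of $f_{03}$ in the band $2n(n-3)\le f_{03}\le 2(n+1)(n-2)$ with $f_0=n+1$ one needs explicit families hitting all four residues of $f_{03}\bmod 4$ (the paper uses generalized stackings $R_i(n)$ on cyclic polytopes and facet splittings of $C_4^*(n)$ through $0,1,2,3$ vertices), and the ``does not exist'' slots in that band must line up exactly with the $k$-gaps; this matching is where the proof actually closes, and your proposal leaves it open.
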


See Figure~\ref{plot:f0f03} for a visualization of the projection in the plane $(f_0,f_{03}-4f_0)$.
\begin{figure}[htbp]
  \centering
\includegraphics[width=10.5cm]{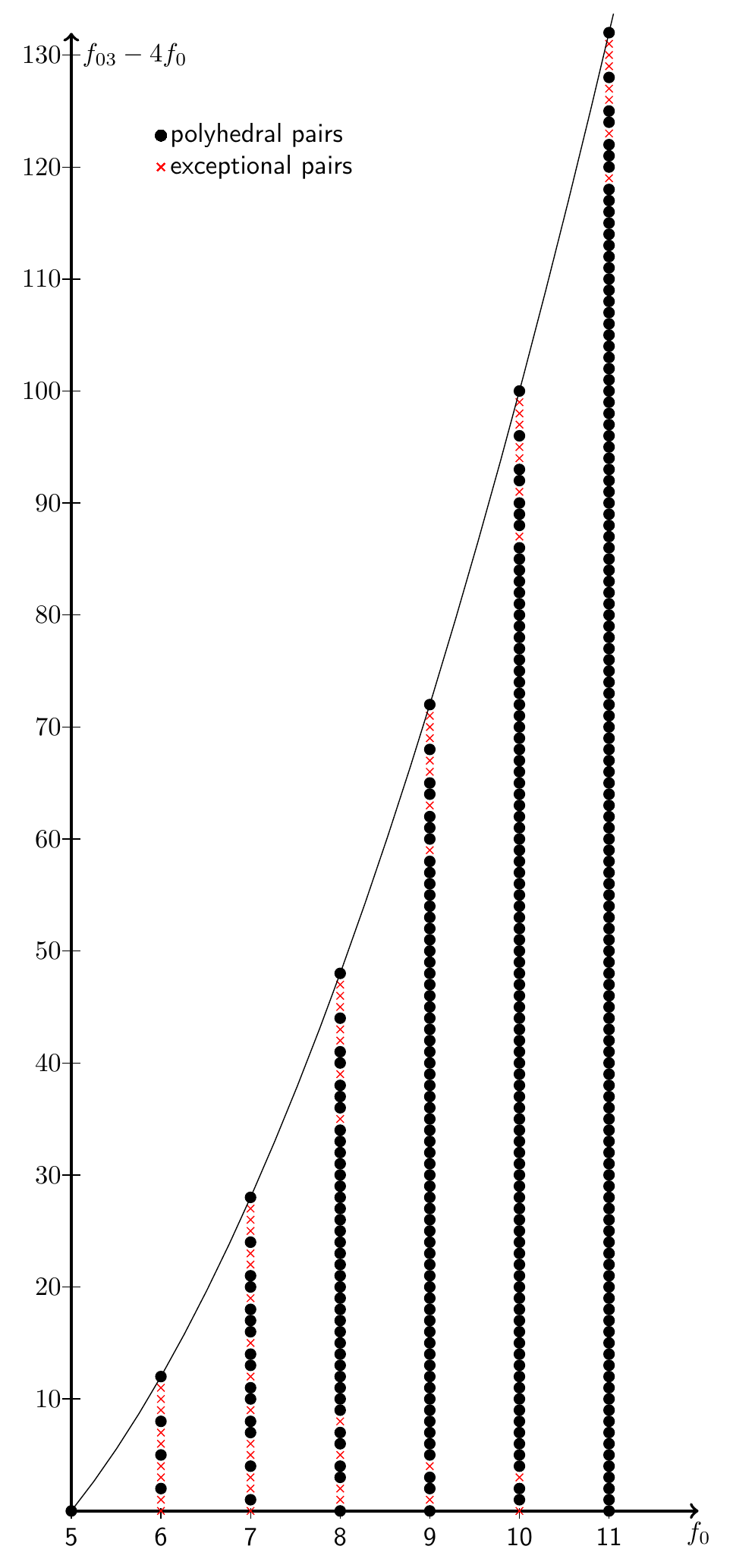}
\caption{Projection $\PiFset{4}{0,03}{}$}
\label{plot:f0f03}
\end{figure}

The proof of Theorem~\ref{main_thm1} follows the proofs of the projections of the 
$f$-vector~\cite{Barnette}, \cite{BarnetteReay}, \cite{Grunbaum},
by taking small polytopal pairs as well as polytopal pairs on the boundaries and constructing 
new polytopal pairs from the given ones.
The inductive methods used for this proof are the  stacking and truncating constructions
from Theorem~\ref{thm:f0f3}, \ref{thm:f0f1} and \ref{thm:f0f2} and ``facet splitting'' methods 
generalized from the methods used in the proof of Theorem~\ref{thm:f1f2}.
 
\begin{lemma}\label{bounds}
If $P$ is a $4$-dimensional polytope with $f_0$ vertices and $ f_{03} $ vertex-facet incidences, then
 \[
 	 4f_0 \le f_{03} \le 2f_0(f_0 -3). 
 \]
 \end{lemma}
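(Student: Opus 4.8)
The plan is to reduce everything to elementary facts about $3$-polytopes by passing to vertex figures. For a vertex $v$ of $P$, the vertex figure $P/v$ is a $3$-polytope whose $j$-faces correspond to the $(j{+}1)$-faces of $P$ containing $v$; in particular its facets (the $2$-faces) are in bijection with the facets of $P$ through $v$, while its vertices are in bijection with the edges of $P$ through $v$, so that $f_0(P/v)=\deg(v)$, the degree of $v$ in the graph of $P$. Counting the vertex–facet incidences of $P$ by first fixing the vertex therefore gives the identity
\[
  f_{03}(P)\;=\;\sum_{v}f_2(P/v),
\]
the sum running over all vertices $v$ of $P$. First I would record this identity together with the two-sided estimate $4\le f_2(Q)\le 2f_0(Q)-4$ valid for every $3$-polytope $Q$, which follows from Euler's formula $f_0(Q)-f_1(Q)+f_2(Q)=2$ and the inequality $2f_1(Q)\ge 3f_2(Q)$ (each $2$-face is a polygon, hence has at least three edges, and each edge lies in exactly two $2$-faces).

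For the lower bound I would then just use $f_2(P/v)\ge 4$, so that $f_{03}(P)=\sum_v f_2(P/v)\ge 4f_0$. For the upper bound I would use $f_2(P/v)\le 2f_0(P/v)-4=2\deg(v)-4$, sum over all vertices, and invoke the handshake identity $\sum_v\deg(v)=2f_1(P)$ to obtain $f_{03}(P)\le 4f_1(P)-4f_0$. It then only remains to bound $f_1(P)$: since the graph of $P$ is a simple graph on $f_0$ vertices, $f_1(P)\le\binom{f_0}{2}$, and substituting gives $f_{03}(P)\le 4\binom{f_0}{2}-4f_0=2f_0(f_0-3)$, as claimed.

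I do not expect a genuine obstacle; the only step requiring a small idea is the passage to vertex figures together with the observation that Euler's formula for the $3$-dimensional vertex figures, combined with the trivial inequality $f_1\le\binom{f_0}{2}$, already delivers both bounds. I would also note that the two inequalities are sharp: the lower bound is attained by simple $4$-polytopes (for instance the $4$-simplex, where every vertex figure is a tetrahedron), and the upper bound is attained by the cyclic polytope $C_4(f_0)$, which is $2$-neighborly (so $f_1=\binom{f_0}{2}$) and simplicial (so every vertex figure is a simplicial $3$-polytope, meeting $f_2(P/v)=2\deg(v)-4$ with equality) — consistent with the fact that the boundary pairs in Theorem~\ref{main_thm1} arise from (dual-)neighborly polytopes.
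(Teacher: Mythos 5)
Your proof is correct, and it takes a genuinely different route from the paper on the upper bound. The paper gets $f_{03}\le 2f_0(f_0-3)$ by invoking the generalized Upper Bound Theorem for flag vectors, $f_S\le f_S(C_d(n))$, and then evaluating $f_{03}(C_4(n))=4f_3(C_4(n))=2n(n-3)$; the lower bound is the same observation you make (every vertex lies in at least $d$ facets), just stated without the vertex-figure language. Your argument replaces the citation of the flag UBT by the decomposition $f_{03}=\sum_v f_2(P/v)$, Euler's formula for the $3$-dimensional vertex figures, and the trivial bound $f_1\le\binom{f_0}{2}$; all steps check out, including the arithmetic $4\binom{f_0}{2}-4f_0=2f_0(f_0-3)$. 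What your approach buys is a completely elementary, self-contained proof that works precisely because $d=4$ makes the vertex figures $3$-dimensional; as a bonus, your intermediate estimate $f_{03}\le 4f_1-4f_0$ is exactly Inequality~(\ref{edge_lower}) of the paper, which is there derived from Bayer's inequality~(\ref{centerboolean}) combined with the generalized Dehn--Sommerville relation~(\ref{GDS_equ}) --- so you have in passing given an elementary proof of that inequality as well (with equality exactly when all vertex figures are simplicial, i.e., when $P$ is center boolean). What the paper's approach buys is brevity given the cited machinery and the fact that it generalizes verbatim to $f_S$ for arbitrary $S$ and arbitrary dimension, which is the form in which the authors actually use it. Your remarks on sharpness (simple polytopes for the lower bound, neighborly simplicial polytopes for the upper bound) agree with the equality cases stated in the paper.
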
 
 \begin{proof}
Every vertex of a $d$-polytope lies in at least $d$ facets, so
clearly $4f_0 \le f_{03}$ holds for all $4$-dimensional polytopes,  with equality if and only if $P$ is simple.
 
The second inequality follows from a generalization of the upper bound theorem to flag vectors:
For any $d$-dimensional polytope with $n$ vertices and for any $S \subseteq \{0, \dots,d-1\}$,
\[
	f_S \le f_S(C_d(n)),
\]
where $C_d(n)$ is the $d$-dimensional cyclic polytope with $n$ vertices \cite[Thm~18.5.9]{BilleraBjorner}.
In particular, $4$-dimensional cyclic polytopes are simplicial, and for any $4$-dimensional polytope~$P$,
 \[
 	f_{03}(P)\le f_{03}(C_4(n))=4f_3(C_4(n))=2n(n -3)
 \]
with equality if and only if $P$ is neighborly.
\qed
\end{proof}
  
\begin{lemma}\label{thm:exceptions}
There is no $4$-polytope $P$ with $f_0(P) =f_0$ and $f_{03}(P) = f_{03}$
if $(f_0,f_{03})$ is any of the following pairs:
 \begin{align*}
&(6,24),(6,25),(6,28),\\
& (7,28), (7,30), (7,31), \\
&(7,33), (7,34), (7,37), (7,40),\\
&(8,33), (8,34),(8,37), (8,40),\\
& (9,37), (9,40), (10,40), (10,43),\\
&(f_0,2f_0(f_0-3)-k) \ \text{ for } k\in \{1,2,3,5,6,9,13\} \text{ and for any } f_0\ge 6.
 \end{align*}
\end{lemma}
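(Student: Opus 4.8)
The plan is to treat separately the two regimes that call for quite different tools: the ``near-neighborly'' pairs $f_{03}=2f_0(f_0-3)-k$ with $k\in\{1,2,3,5,6,9,13\}$, and the eighteen sporadic small pairs.

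For the near-neighborly family the key is an exact formula for the defect $k=2f_0(f_0-3)-f_{03}$. Writing $f_{03}=\sum_F f_0(F)$ (the sum over all facets $F$) and combining the Euler relations of $P$, of its facets, and of its vertex figures, one obtains
\[
k\;=\;4\Bigl(\binom{f_0}{2}-f_1\Bigr)\;+\;\bigl(4f_1-4f_0-f_{03}\bigr)\;=:\;4g_1+E ,
\]
where $g_1:=\binom{f_0}{2}-f_1\ge0$ is the number of non-edges of $P$ and $E:=4f_1-4f_0-f_{03}=\sum_F\bigl(2f_1(F)-3f_0(F)\bigr)\ge0$ is the total ``degree excess'' of the facets, which is $0$ precisely when every facet is a simple $3$-polytope. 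If $E=0$, then $k=4g_1$ is divisible by $4$ and hence not in $\{1,2,3,5,6,9,13\}$. If $E\ge1$, then some facet $F$ has $2f_1(F)-3f_0(F)\ge1$, so $F$ is a non-simple $3$-polytope, has a vertex of degree $\ge4$, has at least five vertices, and is therefore not a simplex; a $3$-polytope with five or more vertices is not $2$-neighborly, so $F$ has two non-adjacent vertices, and two vertices that are non-adjacent inside a face of $P$ are non-adjacent in $P$. Hence $E\ge1$ forces $g_1\ge1$, so $k\ge5$, which rules out $k\in\{1,2,3\}$.

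It remains to exclude $k\in\{5,6,9,13\}$, for which the identity $k=4g_1+E$ leaves only finitely many candidate pairs $(g_1,E)$. For most of them I would obtain a contradiction by counting more carefully the non-edges that the non-simple facets force: a $3$-polytope $F$ with $p\ge5$ vertices has at least $\binom{p}{2}-(3p-6)=\tfrac{1}{2}(p-3)(p-4)$ non-edges, all of which are non-edges of $P$; a square-pyramid facet forces the two diagonals of its quadrilateral $2$-face; a triangular-bipyramid facet forces the segment joining its two apices; a simplicial non-simplex facet contributes $3f_0(F)-12\ge3$ to $E$. Running through the admissible $(g_1,E)$, one sees that the required number $g_1$ of non-edges is then incompatible with the facet data in all cases except $(g_1,E)=(2,1)$ for $k=9$ and $(g_1,E)\in\{(3,1),(2,5),(1,9)\}$ for $k=13$. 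For those I would pin down the entire facet list of a hypothetical $P$ (exactly one non-simple facet, a square pyramid; all other facets simple; exactly $g_1$ non-edges) and derive a contradiction from the Euler and flag relations, such as $\sum_F f_2(F)=2f_2$, combined with the $(f_0,f_3)$-characterization of Theorem~\ref{thm:f0f3}. I expect these tight cases to be the main obstacle.

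For the eighteen small pairs with $f_0\le8$ I would simply go through the classification of all combinatorial types of $4$-polytopes with at most eight vertices due to Altshuler and Steinberg, compute $f_{03}=\sum_F f_0(F)$ for each type, and check that the listed values never occur. The four pairs with $f_0\in\{9,10\}$ lie beyond that classification, so I would argue indirectly. Since each facet is a $3$-polytope, $f_{03}=\sum_F f_0(F)\ge4f_3$, so $f_{03}$ bounds $f_3$ from above (for instance $f_3\le9$ when $f_{03}=37$), while Theorem~\ref{thm:f0f3} bounds $f_3$ from below in terms of $f_0$; this leaves only a few possible values of $f_3$, and a $4$-polytope with that few facets has a dual with correspondingly few vertices, which either returns us to the Altshuler--Steinberg range or is small enough to analyse by hand using $f_{03}(P)=f_{03}(P^{*})$ and the Euler relation $f_1-f_2=f_0-f_3$. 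The pair $(10,40)$ is the cleanest: it would require a simple $4$-polytope with $10$ vertices, i.e.\ a simplicial $4$-polytope with exactly $10$ facets, and none exists, since by the Lower and Upper Bound Theorems a simplicial $4$-polytope with $6$ vertices has at most $9$ facets while one with at least $7$ vertices has at least $11$. Apart from the tight $(g_1,E)$-cases and these few small polytopes, the remaining arguments are routine verifications.
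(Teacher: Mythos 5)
Your overall architecture matches the paper's (flag-vector arithmetic for the family $2f_0(f_0-3)-k$, the Altshuler--Steinberg classification for $f_0\le 8$, duality for $f_0\in\{9,10\}$), and your identity $k=4g_1+E$ with $E=\sum_F\bigl(2f_1(F)-3f_0(F)\bigr)\ge 0$ is a correct and rather elegant repackaging of the paper's Inequality~(\ref{edge_lower}); it disposes of $k\in\{1,2,3\}$ and even of $k=5,6$ more directly than the paper does. But two genuine gaps remain. First, you explicitly defer the configurations $(g_1,E)=(2,1)$ for $k=9$ and $(3,1),(2,5),(1,9)$ for $k=13$, and your stated plan for them (``exactly one non-simple facet, a square pyramid'') does not even describe $(2,5)$ or $(1,9)$, whose excess cannot come from a single square pyramid. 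These cases do close, but you have not closed them. The paper's device is a parity count of $f_{03}$ modulo $4$: since $2f_0(f_0-3)-k\equiv -k\pmod 4$, a polytope whose only non-tetrahedral facets are $s$ facets with five vertices has $f_{03}\equiv s\pmod 4$, which kills $(2,1)$ and $(3,1)$ outright ($s=1$ there, while $-k\equiv 3$), and for $k=13$ forces $s=3$, after which the paper double-counts $2$-faces via $f_2=\tfrac12\sum_F f_2(F)$ against the Euler relation to finish. You would need to supply this, or an equivalent, explicitly.

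Second, and more seriously, your reduction of $(9,37)$, $(9,40)$ and $(10,43)$ does not go through. The bound $f_{03}\ge 4f_3$ gives only $f_3\le 9$ resp.\ $f_3\le 10$, so the dual polytope may have $9$ or $10$ vertices --- outside the Altshuler--Steinberg range --- and the subcase $f_0(P^*)=9$ with $f_{03}(P^*)=37$ is literally the statement you are trying to prove, so the argument is circular there. (For instance, a hypothetical $9$-vertex, $9$-facet polytope with eight tetrahedral facets and one triangular-bipyramid facet would have $f_{03}=37$ and is not excluded by anything you write.) The paper's essential extra ingredient here is Bayer's inequality $f_{03}\ge 3f_0+3f_3-10$ (Inequality~(\ref{facet_upper})), which yields $f_3\le 7$ in all four cases and lands the dual inside the classified range. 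Your special argument for $(10,40)$ via simplicity and the Lower and Upper Bound Theorems is correct, but without a substitute for Inequality~(\ref{facet_upper}) the other three pairs remain open in your write-up.
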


For the proof of this lemma we need some equations and inequalities which hold for the flag vector 
of any $4$-polytope.
\begin{lemma}[Generalized Dehn--Sommerville equations, Bayer \& Billera 
{\cite[Thm.~2.1]{BayerBillera}}]\label{genDehnSommerville}  
Let $P$ be a $d$-polytope and $S \subseteq \{0, 1, \dots , d-1\}$.
Let $\{i,k\} \subseteq S \cup \{-1,d\}$ such that  $i < k-1$
and such that there is no $j \in S$ for which $i < j < k$. Then
\[
	\sum\limits_{j=i+1}^{k-1} (-1)^{j-i-1}f_{S \cup j}(P) = f_S(P)(1-(-1)^{k-i-1}).
\]
\end{lemma}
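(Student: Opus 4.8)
The plan is to derive these equations by applying the Euler--Poincaré relation locally, to each interval of the face lattice of $P$, and then summing over all chains of the prescribed type $S$. The two classical inputs are: first, that every interval $[F,G]$ in the face lattice of a polytope is again the face lattice of a polytope, of dimension $\dim G-\dim F-1$ (this is the face figure $G/F$); and second, the Euler relation, which for a polytope $Q$ of dimension $e$ reads $\sum_{\ell=-1}^{e}(-1)^\ell f_\ell(Q)=0$ with the conventions $f_{-1}(Q)=f_e(Q)=1$ accounting for the empty face and for $Q$ itself.

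First I would fix a single chain $C$ of type $S$ and single out its two faces $F_i$ and $F_k$ of dimensions $i$ and $k$, using the conventions $F_{-1}=\varnothing$ when $i=-1$ and $F_d=P$ when $k=d$. By hypothesis no element of $S$ lies strictly between $i$ and $k$, so every face $G$ of $P$ with $F_i\subset G\subset F_k$ can be inserted into $C$ to produce a chain of type $S\cup\{\dim G\}$. The interval $[F_i,F_k]$ is the face lattice of a polytope $Q$ of dimension $k-i-1$, and the faces $G$ with $F_i\subset G\subset F_k$ and $\dim G=j$ are exactly the faces of $Q$ of dimension $j-i-1$. Applying the Euler relation to $Q$ and isolating the two extreme terms $\ell=-1$ and $\ell=k-i-1$ (which account for $F_i$ and $F_k$) gives $\sum_{j=i+1}^{k-1}(-1)^{j-i-1}\,g_j(C)=1-(-1)^{k-i-1}$, where $g_j(C)$ denotes the number of faces of dimension $j$ that can be inserted into $C$ in this gap.

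Finally I would sum this per-chain identity over all $f_S(P)$ chains $C$ of type $S$. Summing $g_j(C)$ over $C$ counts precisely the chains of type $S\cup\{j\}$, so the left-hand side becomes $\sum_{j=i+1}^{k-1}(-1)^{j-i-1}f_{S\cup j}(P)$, while the right-hand side becomes $f_S(P)\bigl(1-(-1)^{k-i-1}\bigr)$, which is the claimed equation. The one point requiring care is the reduction to the Euler relation on the interval, namely verifying that $[F_i,F_k]$ really is polytopal of the stated dimension and checking the boundary conventions $i=-1$ and $k=d$ (where the interval degenerates to the face lattice of $F_k$, respectively to the quotient $P/F_i$, and where the case $S=\varnothing$, $i=-1$, $k=d$ recovers the ordinary Euler relation for $P$); the summation and the index bookkeeping are then routine.
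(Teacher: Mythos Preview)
Your argument is correct and is essentially the original proof of Bayer and Billera: apply the Euler relation to the interval $[F_i,F_k]$ in the face lattice (a polytope of dimension $k-i-1$), then sum over all chains of type $S$. The paper does not supply its own proof of this lemma; it is quoted from \cite[Thm.~2.1]{BayerBillera} and used as a black box, so there is nothing to compare against beyond noting that your write-up matches the standard derivation.
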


For $d=4$, $S= \{0\}$,  $i=0$, $k =4$ and with the observation $f_{01} = 2f_1$ we obtain
\begin{align}
f_{02} &= -2f_0 +2f_1 +f_{03}.\label{GDS_equ}
\end{align}

\begin{lemma}[Bayer {\cite[Thm.~1.3, 1.4]{Bayer}}]\label{facets:upper_bound}
The flag vector of every $4$-polytope satisfies the inequalities
\begin{align}
f_{02} -3f_2+ f_1  - 4f_0 +10 &\ge  0 \\
\text{ and } -6 f_0 +6f_1- f_{02} &\ge 0. \label{centerboolean}
\end{align}
\end{lemma}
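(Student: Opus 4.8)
The plan is to treat the two inequalities separately, since they are of rather different character: the second is an upper-bound-type constraint coming from the $2$-skeleton, while the first is a lower-bound-type (in fact toric $g$-)inequality. Both reduce to $3$-dimensional facts about the vertex figures and the facets of $P$, which are $3$-polytopes. First I would record the incidence identities that make this reduction transparent. Since every $2$-face of a polytope is a polygon (equally many vertices and edges) and the edge figure of a $4$-polytope is again a polygon, one gets $f_{01}=2f_1$, $f_{23}=2f_2$, and $f_{02}=f_{12}=f_{13}$. Moreover, writing $P/v$ for the three-dimensional vertex figure at a vertex $v$, the correspondence between the $i$-faces of $P/v$ and the $(i{+}1)$-faces of $P$ through $v$ yields
\[
\sum_{v} f_0(P/v)=2f_1,\qquad \sum_{v} f_1(P/v)=f_{02},\qquad \sum_{v} f_2(P/v)=f_{03}.
\]

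For the inequality $-6f_0+6f_1-f_{02}\ge0$ I would sum the planar (Steinitz/Upper Bound) inequality $f_1(Q)\le 3f_0(Q)-6$, valid for every $3$-polytope $Q$, over all vertex figures $Q=P/v$. Using the first two identities above this gives
\[
f_{02}=\sum_v f_1(P/v)\ \le\ \sum_v\bigl(3f_0(P/v)-6\bigr)=3\cdot 2f_1-6f_0=6f_1-6f_0,
\]
which is exactly the claim. Equality holds precisely when every vertex figure is simplicial, e.g. for simple polytopes such as the $4$-cube, which is a useful consistency check.

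The first inequality is the substantial one, and I would prove it by identifying its left-hand side with the toric $g$-number $g_2=h_2-h_1$ of $P$. Computing the toric $h$-vector through the Bayer--Billera/Stanley recursion, working up from the empty face, vertices, edges, polygonal $2$-faces, and $3$-polytopal facets of $P$ and using the incidence identities above, one obtains after a routine expansion
\[
h_1=f_0-4,\qquad h_2=f_{02}-3f_2+f_1-3f_0+6,
\]
so that $g_2=h_2-h_1=f_{02}-3f_2+f_1-4f_0+10$ is precisely the claimed expression. (It is the substitution of the Euler relation and of \eqref{GDS_equ} that collapses $h_1$ to $f_0-4$.) The inequality thus becomes the nonnegativity $g_2\ge0$. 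Two sanity checks: for the $4$-simplex $g_2=0$; and for a simplicial $4$-polytope $f_{02}-3f_2=0$, so the inequality specializes to the Lower Bound Theorem $f_1\ge 4f_0-10$. In general the nonnegative quantity $f_{02}-3f_2=\sum_{G}(k_G-3)$, where $G$ ranges over the $2$-faces and $k_G$ is the number of edges of $G$, measures the deviation from simpliciality.

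The main obstacle is exactly this last step: the nonnegativity of the toric $g_2$ for an \emph{arbitrary} $4$-polytope. For rational (e.g. integral) polytopes it follows from Stanley's hard-Lefschetz argument for the associated projective toric variety, but $4$-polytopes need not be rationally realizable, so a limiting/approximation argument does not cover all combinatorial types; the fully general statement is Karu's theorem on the Hard Lefschetz property of combinatorial intersection cohomology. An elementary route avoiding this machinery would be a rigidity/stress argument in the spirit of Kalai's proof of the Lower Bound Theorem, applied to the $1$-skeleton (or to a suitable polytopal stress space) of $P$; the difficulty there is to set up a rigidity framework valid for non-simplicial polytopes and to control how $g_2$ behaves, which is precisely where the genuine content of the inequality lies.
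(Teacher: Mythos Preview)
The paper does not prove this lemma; it quotes it from Bayer \cite{Bayer} and only records afterward that equality in \eqref{centerboolean} characterizes center-boolean polytopes. So there is no proof in the paper to compare against, and what follows assesses your argument on its own.

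Your treatment of the second inequality is correct and is the natural argument: summing the $3$-dimensional bound $f_1(Q)\le 3f_0(Q)-6$ over all vertex figures $Q=P/v$, using $\sum_v f_0(P/v)=2f_1$ and $\sum_v f_1(P/v)=f_{02}$, gives $f_{02}\le 6f_1-6f_0$ directly. Your equality condition ``every vertex figure is simplicial'' is the same as ``every vertex is simple in every facet containing it'', i.e.\ every facet is a simple $3$-polytope, which is exactly the center-boolean characterization the paper states.

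For the first inequality your identification is correct: the toric $h$-vector computation $h_1=f_0-4$, $h_2=f_{02}-3f_2+f_1-3f_0+6$ checks out, so the inequality is precisely $g_2^{\mathrm{tor}}(P)\ge 0$. You are also right that Stanley's hard Lefschetz argument covers only rational polytopes, and since non-rational $4$-polytopes exist, the fully general statement along this route needs Karu. Your alternative suggestion---a Kalai--Whiteley rigidity argument---is on the mark and can in fact be made precise: triangulate each $2$-face $G$ (a $k_G$-gon) by $k_G-3$ diagonals to obtain a bar framework on the vertices of $P$ with $f_1+(f_{02}-3f_2)$ bars; infinitesimal rigidity of this framework in $\R^4$ (Whiteley's theorem that the triangulated $2$-skeleton of a convex $d$-polytope is infinitesimally $d$-rigid) then gives $f_1+f_{02}-3f_2\ge 4f_0-10$ directly. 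That rigidity input is nontrivial but far more elementary than Karu, so your flagging of this step as the genuine content is accurate. As written, though, your argument for the first inequality is a correct roadmap rather than a self-contained proof.
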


Inequality~\textnormal{(\ref{centerboolean})} holds with equality if and only if  the $4$-polytope 
is center boolean, that is, if all its facets are simple.

Using Lemma~\ref{genDehnSommerville} and the Euler--Poincar\'{e} formula for dimension $4$
{\cite[Thm.~8.1.1]{Grunbaum}} to rewrite Lemma~\ref{facets:upper_bound}, we obtain the inequalities
\begin{align}
-3f_0 -3f_3 + f_{03} +10 &\ge 0\label{facet_upper} \\
\text{ and } 4f_0 -4f_1 +f_{03} &\le 0. \label{edge_lower}
\end{align}
\begin{proof}[Proof of Lemma \ref{thm:exceptions}]
We first show that there is no polytope $P$ with 
\[
	(f_0(P),f_{03}(P)) = (f_0, 2f_0(f_0-3) -k) \ \text{ for } k \in \{1,2,3,5,6,9,13\}.
\]
For $k=1,2,3$ we prove the non-existence directly. 
For $k=5,6$ we show that if $P$ is a  polytope with $2f_0(f_0-3) -5 \le f_{03} \le  2f_0(f_0-3) -7$, 
then necessarily $f_{03} =  2f_0(f_0-3) -7$.
The proof for $k=9$ and $k=13$ follows similarly.

For $k > 0$ any $4$-polytope with polytopal pair $(f_0, 2f_0(f_0-3) -k)$ cannot be neighborly, so
\[
	f_1 < \binom{f_0}{2}.
\]
On the other hand, for $(f_0(P),f_{03}(P)) = (f_0, 2f_0(f_0-3) -k)$ 
Inequality~(\ref{edge_lower}) reads\\ $\frac{1}{2}f_0(f_0-1)-\frac k4 \le f_1$.
Both inequalities together give
\begin{align}
 \binom{f_0}{2} - \frac k4 \le f_1 <  \binom{f_0}{2}. \label{edge_equation}
\end{align}
There is no integer solution for $k=1,2,3$.
 
For $k=5,6,7$, the only possible integer value for $f_1$ is $\frac{1}{2}f_0(f_0-1)-1$.
Assume that $P$ is a polytope with
\[
	f_1 = \binom{f_0}{2} -1
\] 
and
\[
	2f_0(f_0-3) -5 \le f_{03} \le  2f_0(f_0-3) -7.
\]
Since $f_1 = \binom{f_0}{2} -1$, there is a unique pair $v_1,v_2$ of vertices of $P$ 
not forming an edge. We call such a pair of vertices a \emph{non-edge}.
Any facet of $P$ which is not a simplex must contain this non-edge, since the only 
$3$-polytope in which every two vertices form an edge is the simplex.
Consider a facet $F$ which is not a simplex, and therefore contains the unique non-edge.
Such a facet $F$ needs to exist, since if $P$ were simplicial, $f_{03}\equiv 0 \mod 4$.
Observe that if $F$ would have more than five vertices, then we could find five vertices 
of $F$ for which every two vertices form an edge.
This subpolytope of $F$ could not be $d$-dimensional, for $d \le 3$. 
From this contradiction follows that $F$ has five vertices.
The only combinatorial types of $3$-polytopes with five vertices are the square pyramid 
and the bipyramid over a triangle, only the latter has exactly one non-edge. 
So $F$ is a bipyramid, and the non-edge is between the apices of $F$.
If there were another non-tetrahedral facet of $P$, it would intersect $F$ in a common face containing the non-edge.
Such a face does not exist, and hence $P$ is a polytope with one bipyramidal facet and $t$ 
tetrahedral facets, for some integer $t$. 
This implies that
 \[
	 f_{03} = 4t+5 \equiv 1 \mod 4.
 \]
From the assumption $ 2f_0(f_0-3) -5 \le f_{03} \le  2f_0(f_0-3) -7$ follows now 
 \[ 
 	f_{03} =  2f_0(f_0-3) -7.
 \]
Assume now that there is a polytope $P$ with 
 \[
	(f_0(P),f_{03}(P))= (f_0, 2f_0(f_0-3) -9).
 \] 
Inequality~(\ref{edge_equation}) implies that
 \[
	 f_1= \binom{f_0}{2} -2 \text{ or }  f_1= \binom{f_0}{2} -1.
 \]
If $f_1= \binom{f_0}{2} -1$, then we have just proved that $f_{03} \equiv 1 \mod 4$. 
Since $f_{03}(P) \equiv 3 \mod 4$, it follows that $P$ has two non-edges. 
The inequality $f_1 \le 3f_0 -6$ holds for $3$-dimensional polytopes and 
any facet $F$ has at most two non-edges:
 \begin{align*}
 \binom{f_0(F)}{2} -2\le f_1(F)\le 3f_0(F)-6 \Rightarrow f_0(F)<6.
 \end{align*}
Any non-tetrahedral facet is hence a polytope with five vertices, a bipyramid over a triangle or a square pyramid.
Since $f_{03}(P) \equiv 3 \mod 4$, there have to be at least three non-tetrahedral facets. 
Bipyramids have one non-edge, not contained in any other facet.
Square pyramids have two non-edges, which are both contained in exactly one other facet.
This contradicts the fact that there are only two non-edges in $P$.
In conclusion, there is no polytope with 
$(f_0,f_{03})$ $= (f_0, 2f_0(f_0-3) -9)$.
   
Finally, assume that there exists a polytope $P$ with polytopal pair 
\[
	(f_0(P),f_{03}(P))= (f_0, 2f_0(f_0-3) -13).
\] 
From Inequality~(\ref{edge_equation}) it follows that $P$ has 
$\binom{f_0}{2} -3$, $\binom{f_0}{2} -2$ or $\binom{f_0}{2} -1$ edges.
Each facet $F$ of $P$ has at most three non-edges.
 For any facet $F$ of $P$ the inequality
$f_1(F) \le 3f_0(F) -6$ now yields
$\binom{f_0(F)}{2} -3 \le 3f_0(F)-6$
$\Rightarrow f_0(F) <7$.
If $F$ has six vertices, it must have $12$ edges and three non-edges.
There are only two such combinatorially different $3$-polytopes, which both are simplicial.
 
Assume that $P$ has a facet $F$ with six vertices. Then $F$ contains three non-edges,
all of them not in any $2$-face of $F$ and hence not in any other facet.
So all other facets of $P$ are tetrahedra.
This is a contradiction to $f_{03}(P) \equiv 3 \mod 4$.

$P$ is not simplicial, so there are non-tetrahedral facets, all of them with five vertices. 
Observe that since there are at most three non-edges, we cannot have more than three non-tetrahedral facets.
Together with $f_{03}(P) \equiv 3 \mod 4$, this leaves us with two cases:
\begin{align*}
 &\text{(i) The non-tetrahedral facets of $P$ are three bipyramids over triangles.} \\
 &\text{(ii) The non-tetrahedral facets of $P$ are two square pyramids and one}\\
 &\text{bipyramid over a triangle.} 
\end{align*}
In both cases,  let $t$ denote the number of tetrahedra in $P$.
Then
\begin{align*}
f_{03}(P)&= 2f_0(f_0-3) -13= 4t +3 \cdot 5\\
\Rightarrow \  t &= \tfrac{1}{2}f_0(f_0-3)-7\\
\Rightarrow \  f_3(P) &= t+3 = \tfrac{1}{2}f_0(f_0-3)-4.
\end{align*}
We can now calculate $f_2(P)$ in two ways.
From the  Euler--Poincar\'e formula,
\begin{align*}
 f_2&=f_1+f_3-f_0 \\
 &= \binom{f_0}{2} -3 + \tfrac{1}{2}f_0(f_0-3)-4 - f_0\\
 &= f_0(f_0-3)-7.
\end{align*}
Each $2$-face lies in exactly two facets.
The number of $2$-faces of $P$ can therefore also be calculated
by counting the number of $2$-faces in each facet.
In case (i) this gives:
\begin{align*}
&f_2= \tfrac{1}{2}f_{23}
 = \tfrac{1}{2}(4t + 3 \cdot 6)
 = f_0(f_0-3)-5
 \ne  f_0(f_0-3)-7.
\intertext{In case (ii) we obtain:}
&f_2= \tfrac{1}{2}f_{23}
 = \tfrac{1}{2}(4t + 2 \cdot 5 + 6)
 = f_0(f_0-3)-6
 \ne  f_0(f_0-3)-7.
\end{align*}
So there cannot be a polytope with polytopal pair $(f_0, 2f_0(f_0-3) -13)$.
\smallskip

It remains to show the non-existence of $18$ pairs $(f_0, f_{03})$.
All combinatorial types of $4$-polytopes with up to $8$ vertices have been classified by
Altshuler and Steinberg~\cite{AS84,AS85}.
From this classification it follows that there are no polytopes with polytopal pairs
\begin{align*}
&(6,24),(6,25),(6,28),\\
& (7,28), (7,30), (7,31), \\
&(7,33), (7,34), (7,37), (7,40), \\
&(8,33), (8,34),(8,37) \text{ or } (8,40).
\end{align*}
To see that the four pairs $(9,37)$, $(9,40)$, $(10,40)$ and $(10,43)$ are exceptional pairs,
we make use of the  upper bound for the number of facets in terms of the number of vertices and 
vertex-facet incidences.
If there were a polytope $P$ with polytopal pair
$(9,37)$, $(9,40)$, $(10,40)$ or $(10,43)$,
due to Inequality~(\ref{facet_upper}) it would need to have less than $8$ facets.
By duality, this would give us a polytope $P^*$ with 
$f_{03}(P^*)  = 37,40$ or $43$ and $f_0(P^*)  \le 7$.
From the upper bound $f_{03} \le 2f_0(f_0-3)$ it follows that $f_0(P^*) = 7$.
As seen above, polytopes with polytopal pair $(7,37)$ or $(7,40)$ do not appear in the classification.
Pair $(7,43)$ is of the type $(f_0,2f_0(f_0-3)-13)$, which is an exceptional pair.
\qed
\end{proof}

We will use the classification of $4$-dimensional polytopes with up to $8$ vertices~\cite{AS84,AS85}
together with some classes of polytopes, such as cyclic polytopes, pyramids, and some additional 
polytopes, and from those polytopes and their polytopal pairs construct all other possible polytopal pairs. 
The methods needed for this construction are described in the following sections.

\subsection{Stacking and truncating}\label{sec:stacking}
The operations \emph{stacking} and \emph{truncating} (see \cite[Sect.~16.2.1]{HenkRGGMZ}) 
turn out to be essential in finding examples of polytopes for all possible polytopal pairs $(f_0,f_{03})$.
Let $P$ be a $4$-polytope with at least one simplex facet $F$ 
and $v$ a point beyond $F$ and beneath all other facets of $P$. 
Let $Q = \textnormal{conv}(\{v\} \cup P)$.
Then 
\[
	f_0(Q) = f_0(P) +1 \text{ and } f_{03}(Q) = f_{03}(P) +12.
\]
Dually, let $Q$ be a polytope obtained by truncating a simple vertex from a polytope $P$.
Then 
\[
	f_0(Q) = f_0(P)+3 \text{ and } f_{03}(Q) = f_{03}(P)+12.
\]
The polytopes obtained through these two methods all have both a simple vertex and a simplex facet. 
This means that we can stack vertices on simplex facets and truncate simple vertices repeatedly. 
Truncating simple vertices and stacking vertices on simplex facets inductively, 
starting from a polytope with $ (f_0, f_{03} )$ with tetrahedral facet and simple vertex, 
we obtain new polytopes with
\[ 
	(f_0 + 2m + n, f_{03} +12n) \ \text{ for } n \ge 0, \ \ 0 \le m \le n.
\]
 Given a  polytope $P$ with a square pyramidal facet $F$, let $v$ be a point 
beyond $F$ and beneath all other facets of $P$. 
Let $Q = \conv(\{v\} \cup P$).
Then 
\[
	f_0(Q) = f_0(P) +1 \text{ and } f_{03}(Q) = f_{03}(P) +16.
\]
The results in this section are simple consequences from
\cite[Thm.~5.2.1]{Grunbaum}, with corrections from \cite{AltshulerShemer}.
\subsection{Generalized stacking on cyclic polytopes}\label{sec:genstacking} 
We need some more methods, especially to create polytopes with polytopal pair 
$(f_0, f_{03})$ close to the upper bound $f_{03}=2f_0(f_0-3)$.
For our next construction we need the observation that every cyclic $4$-polytope 
with $n$ vertices has edges that lie in exactly $n-2$ facets.
Such edges are called \emph{universal edges}.
The following construction was used by Grünbaum~\cite[Sect.~10.4.1]{Grunbaum}
for the characterization of the 
sets $\PiFset{4}{0,3}{}$ and $\PiFset{4}{0,1}{}$.
Starting from a cyclic polytope with $n$ vertices, we can obtain new polytopes by 
stacking a vertex onto it, such that the vertex lies beyond several facets. 
Let $R_i(n), i \in \{1,\dots ,n - 3\}$, denote a polytope obtained from 
the cyclic polytope $C_4(n)$ with $n$ vertices as the convex hull of $C_4(n)$ and
a point $v$, where $v$ is beyond $i$ facets of $C_4(n)$ sharing a universal edge.
Let $F_1, \dots , F_i$ denote these $i$ facets, such that $F_j$ and $F_{j+1}$ 
meet in a common $2$-face, for all $j = 1, \dots , i-1$.
Then  the new polytope $R_i(n)$ has one more vertex than $C_4(n)$ and the following facets:\\

 \noindent 
(1) All $\frac{1}{2}n(n-3) -i$ facets of $C_4(n)$ which $v$ lies beneath.
\\

\noindent
(2) Facets which are convex hulls of $v$
and $2$-faces of $C_4(n)$ that are contained 
in both a facet which $v$ is beyond and a facet which $v$ is beneath.
There are two types of these facets:
\\

(2a) Two such facets for each of the $i-2$ facets 
$F_2, \dots , F_{i-1}$
which $v$ lies beyond 
and which share two $2$-faces with other facets which $v$ lies beyond. 
\\

(2b) Three new facets for each of the two facets $F_1$ and $F_i$ 
which $v$ lies beyond and which share one $2$-face with other facets which $v$ lies beyond.
\\

Note that all these facets are simplices.
 In conclusion, for $1\le i\le n-3$,
\[
	f_0(R_i(n)) = n + 1 \text{ and } f_{03}(R_i(n)) = 2n(n-3) +4i + 8.
\]
Observe that
\[
	f_{03}(C_4(n+1)) = f_{03}(C_4(n)) +4n-4,
\]
so if $i = n-3$, we obtain again a neighborly polytope, with $n+1$ vertices.

\subsection{Facet splitting}\label{sec:facetsplitting}
We need to generalize the stacking method even more to obtain non-simplicial polytopes,
Compare to the \emph{$A$-sewing construction} of Lee and Menzel~\cite{LM10}.
For easier visualization, we choose to work in the dual setting.
Instead of adding a new vertex to a polytope, we will create a new facet in the dual polytope.
This method of \emph{facet splitting} was used by Barnette~\cite{Barnette}
for the classification of $\PiFset{4}{1,2}{}$:
Consider a facet $F$ of a $4$-polytope $P$ and a hyperplane $H$ which intersects the relative interior of $F$ 
in a  polygon $X$. If on one side of $H$, the only vertices of $P$ are simple vertices of $F$,
then we can obtain a new polytope $P^{\prime}$ by separating facet $F$ 
into two new facets by the polygon $X$.
We say that $P^{\prime}$ is obtained from $P$ by \emph{facet splitting}.
      
\subsubsection{Dual of a cyclic polytope}\label{sec:dualcyclic}
    
We will split a facet of the dual of a cyclic polytope (see Barnette~\cite{Barnette}).
$C_4^{*}(n)$, the dual of the cyclic polytope with $n$ vertices,
is a simple polytope with $n$ facets, each facet having $2(n-3)$ vertices.
The facets are all \emph{wedges} over $(n-2)$-gons, that is, polytopes 
with two triangular $2$-faces, $n-5$ quadrilateral $2$-faces and two $(n-2)$-gons meeting in an edge.
Let $G$ be a $2$-dimensional plane in the affine hull of a facet $F$ of $C_4^{*}(n)$. 
Let $X$ be the intersection of $F$ and $G$. All vertices of  $C_4^{*}(n)$
are simple, so we can obtain a new polytope by facet splitting of $C_4^{*}(n)$
by choosing a hyperplane $H$ which contains $G$ such that on one side of $H$
the only vertices of $C_4^{*}(n)$ are vertices of $F$. 
Such a hyperplane can be found by taking
the facet-defining hyperplane of $F$ and rotating it about $G$.
The combinatorial properties $f_1$ and $f_{03}$ of the polytope obtained 
through facet splitting depend on the choice of $G$:
We can choose $G$ not to intersect any vertices of $F$.
Then, for any $i$ such that $3 \le i \le n-2$, 
$X = G \cap F$ can be chosen to be an $i$-gon.
Let $\delta _0(i,n)$ denote the polytope obtained through facet splitting 
for this choice of $G$ (see Figure~\ref{plot:wedge_delta0}).
Now $\delta _0(i,n)$ has one more facet and $i$ more vertices than $C_4^{*}(n)$. 
As $C_4^{*}(n)$ is a simple polytope, all of its edges lie in exactly three facets and
each of the $i$ new vertices of $\delta _0(i,n)$ lies in four facets.
The new polytope has therefore $4i$ more vertex-facet incidences than $C_4^{*}(n)$.
If we instead choose $G$ to intersect exactly one vertex of $F$,
$X$ can again be any $i$-gon for $3 \le i \le n-2$.
Call this polytope $\delta_1(i,n)$. It has  one more facet and  $i -1$ more vertices than $C_4^{*}(n)$.
The $i -1$ new vertices are simple, and the one vertex of $C_4^{*}(n)$ which lies in $X$
is contained in one additional facet.
In total, $f_{03}$ increases by $4i-3$.
The polytopes $\delta_0 (i,n)$ and $\delta_1 (i,n)$ are used in the characterization of 
$\PiFset{4}{1,2}{}$~\cite{Barnette}.

Similarly, let $\delta_2(i,n)$ denote the polytope obtained when $G$ intersects two vertices of $F$.
As before, $i$ can be chosen to be any integer between $3$ and $n-2$.
The new polytope has one more facet, $i-2$ more vertices and $4i-6$ more vertex-facet incidences.
If we choose $G$ to intersect $F$ in three vertices, as the intersection of $G$ and $F$ 
we can obtain $i$-gons for $3 \le i \le n-3$ (see Figure~\ref{plot:wedge_delta3}).
The new polytope, denoted by $\delta_3(i,n)$, has one more facet, $i-3$ more vertices and $4i-9$ more 
vertex-facet incidences.
Let us look at the duals of these polytopes.  
For $ 3 \le i \le n-2$ we obtain polytopes $\delta^*_0(i,n)$, $\delta^*_1(i,n)$ and $\delta^*_2(i,n)$ with
\begin{align*}
    &(f_0(\delta^*_0(i,n)),f_{03}(\delta^*_0  (i,n)))=(n+1, 2n(n-3) +4i),\\ 
    &(f_0(\delta^* _1(i,n)),f_{03}(\delta^*_1(i,n)))=(n+1, 2n(n-3) +4i-3),\\
    &(f_0(\delta^* _2(i,n)),f_{03}(\delta^*_2(i,n)))=(n+1, 2n(n-3) +4i-6).
 \end{align*}
For $3 \le i \le n-3$ we obtain polytopes $\delta^*_3(i,n)$ with
\begin{align*}
     &(f_0(\delta^* _3(i,n)),f_{03}(\delta^*_3(i,n)))=(n+1, 2n(n-3) +4i-9).
\end{align*}
In particular, the polytopes $\delta^*_0(i,n)$, $\delta^*_1(i,n)$, $\delta^*_2(i,n)$ and $\delta^*_3(i,n)$
have simplex facets.
\begin{figure}
     \centering
     \begin{subfigure}{.5\textwidth}
       \centering
       \includegraphics[width=\linewidth]{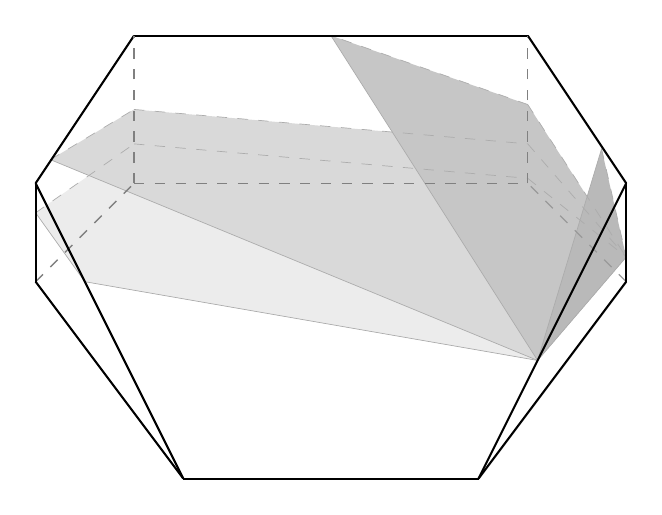}
       \caption{$\delta _0(i,8)$}
       \label{plot:wedge_delta0}
     \end{subfigure}%
     \begin{subfigure}{.5\textwidth}
         \centering
         \includegraphics[width=\linewidth]{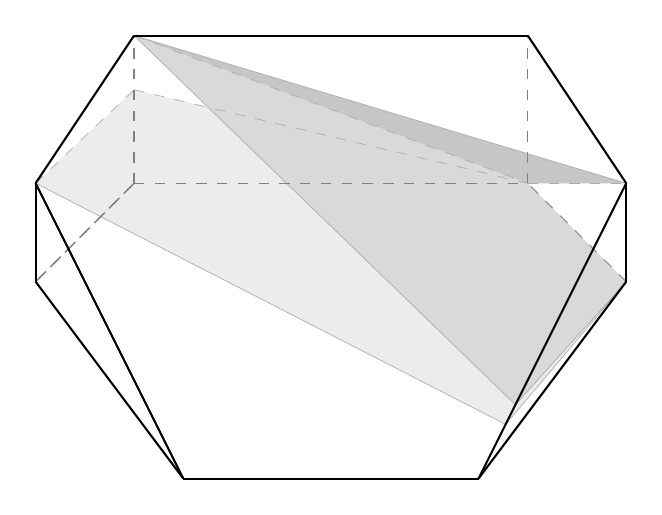}
         \caption{$\delta _3(i,8)$}
         \label{plot:wedge_delta3}
     \end{subfigure}
     \caption{Facet of $C_4^{*}(8)$ split by an $i$-gon}
\end{figure}
 
\subsubsection{Polytopes with a bipyramidal facet}\label{sec:bipyramid}
Given a polytope $P$ with a facet $B$ which is a bipyramid over a triangle,
such that at least one apex  $v$ of $B$ is a simple vertex,
we can split the bipyramid into two tetrahedra by ``moving'' $v$ 
outside the affine hull of $B$, along the unique edge 
which contains $v$ and does not belong to $B$.
The new polytope $\widetilde{P}$ has the same number of vertices and one more facet than $P$.
The apices of the bipyramid still belong to the same number of facets as before,
but the other three vertices now belong to one more facet.
In total, the number of vertex-facet incidences increases by~$3$.
Hence,
\[
	(f_0(\widetilde{P}),f_{03}(\widetilde{P}))=(f_0(P),f_{03}(P) +3).
\]
Note that $\widetilde{P}$ has simplex facets and that
any simple vertex of $P$ is a simple vertex of $\widetilde{P}$.

\subsection{Construction of polytopal pairs $(f_0,f_{03})$}\label{sec:construction}
We can now prove Theorem~\ref{main_thm1}.
First, we list some examples of polytopes with small polytopal pairs $(f_0,f_{03})$ 
for $f_{03} \le 80$ with simplex facet and/or simple vertex, see Table~\ref{table:polytopalpairs}.
The second column in the table explains how the polytope is found.
Polytopes $P_i$ are polytopes with $7$ or $8$ vertices known from
the classification of all polytopes with up to $8$ vertices.
Facet lists of all polytopes $P_i$ can be found in the appendix.  
\emph{Dual of} $(f_0,f_{03})$ means that the polytope is the dual of the polytope 
with polytopal pair $(f_0,f_{03})$  in the table.  
A polytope $P^*$ denotes the dual of a polytope $P$.
The methods \emph{stacking on a square pyramidal facet} and 
\emph{splitting a bipyramidal facet} 
and the polytopes $R_i(n)$ are explained above.

\begin{table}
\centering
\begin{tabular}{ll}
\begin{tabular}[t]{c|c}
 $(f_0,f_{03})$ & Description \\ \hline\hline
 \multicolumn{2}{c}{ Polytopes with $\Delta _3$-facet and simple vertex} \\\hline
 \hline
$(5,20)$& $4$-simplex  \\ \hline
$(6,26)$&  $2$-fold pyramid over quadrangle \\ \hline
$(6,29)$&   pyramid over triangular 
bipyramid   \\ \hline
$(7,29)$&   pyramid over triangular prism \\ \hline
$(7,32)$&   $2$-fold pyramid over pentagon  \\ \hline
$(7,35)$&   $P_1$ \\ \hline
$(7,36)$&   $P_2$ \\ \hline
$(7,39)$&   $P_3$ \\ \hline
$(7,45)$& $P_5$ \\ \hline
$(8,35)$&  $P_1^*$ \\ \hline
$(8,36)$& $P_2^*$ \\ \hline
$(8,38)$&  $2$-fold pyramid over hexagon \\ \hline
$(8,39)$&  $P_8$ \\ \hline
$(8,42)$&  $P_9$ \\ \hline
$(8,45)$&  $P_{11}$ \\ \hline
$(8,46)$&  $P_{12}$ \\ \hline
$(8,49)$&  $P_{13}$ \\ \hline
$(8,52)$&  $P_{14}$ \\ \hline
$(8,55)$&  $P_{15}$ \\ \hline
$(8,59)$&  $P_{16}$ \\ \hline
$(8,62)$&  $P_{18}$ \\ \hline
$(9,39)$&  $P_3^*$ \\ \hline
$(9,42)$&  $P_9^*$ \\ \hline
$(9,45)$&  split bipyramid in $(9,42)$ \\ \hline
$(9,46)$&  split bipyramid in $(9,43)$ \\ \hline
$(9,49)$&  split bipyramid in $(9,46)$ \\ \hline
$(9,52)$&  stack onto \\
&square pyramid in $(8,36)$ \\ \hline
$(10,45)$&  $P_{11}^*$ \\ \hline
$(10,46)$&  $P_{12}^*$ \\ \hline
$(10,49)$&  dual of $(9,49)$ \\ \hline
$(10,52)$&  split bipyramid in $(10,49)$ \\ \hline
$(10,55)$&   stack onto \\
&square pyramid in $(9,39)$ \\ 
\end{tabular}
&
\begin{tabular}[t]{c|c}
 $(f_0,f_{03})$ & Description \\ \hline
  \hline
$(11,45)$&  $P_5^*$ \\ \hline
$(11,49)$&  $P_{13}^*$ \\ \hline
$(11,52)$&  dual of $(9,52)$ \\ \hline
$(11,55)$&  dual of $(10,55)$ \\ \hline
$(12,52)$&  $P_{14}^*$ \\ \hline
$(13,55)$&  $P_{15}^*$ \\ \hline
   \multicolumn{2}{c}{ Polytopes with $\Delta _3$-facet} \\\hline
   \hline
$(6,36)$& cyclic polytope $C_4(6)$  \\ \hline
$(7,42)$&  $P_4$ \\ \hline
$(7,46)$&  $P_6$ \\ \hline
$(7,49)$&  $P_7$ \\ \hline
$(7,52)$&  $R_2(6)$ \\ \hline
$(7,56)$&  cyclic polytope $C_4(7)$  \\ \hline
$(8,43)$&  $P_{10}$ \\ \hline
$(8,60)$&  $P_{17}$ \\ \hline
$(8,63)$&  $P_{19}$ \\ \hline
$(8,65)$&  $P_{20}$ \\ \hline
$(8,66)$&  $P_{21}$ \\ \hline
$(8,68)$&  $P_{22}$ \\ \hline
$(8,69)$&  $P_{23}$ \\ \hline
$(8,70)$&  $P_{24}$ \\ \hline
$(8,72)$&  $P_{25}$ \\ \hline
$(8,73)$&  $P_{26}$ \\ \hline
$(8,76)$&  $P_{27}$ \\ \hline
$(8,80)$&  cyclic polytope $C_4(8)$  \\ \hline
$(9,79)$&  stack onto \\
&square pyramid in $(8,63)$\\\hline
 \hline
   \multicolumn{2}{c}{ Polytopes with simple vertex} \\
  \hline\hline
$(9,36)$& dual of cyclic polytope $C_4(6)$  \\ \hline
$(9,43)$& $P_{10}^*$  \\ \hline
$(10,42)$& $P_4^*$ \\ \hline
$(11,46)$& $P_6^*$  \\ \hline
$(12,49)$& $P_7^*$ \\ \hline
$(13,52)$& $R_2(6)^*$  \\ 
\end{tabular}
\end{tabular}
\caption{Some polytopal pairs}
\label{table:polytopalpairs}
\end{table}

Together with the inductive stacking and truncating methods from 
Section~\ref{sec:stacking}, this gives us all possible pairs for $f_{03} \le 80$
and, in particular, polytopal pairs $(f_0, f_{03})$ with simple vertex and simplicial facet, 
for $f_0 \ge 9$, $53 \le f_{03} \le 64$ and 
$4f_0 \le f_{03}$. 
See Figure~\ref{plot:f0f03_constr}.
Stacking on simplex facets and truncating 
simple vertices of these $87$ pairs of polytopes inductively
will give all polytopal pairs $(f_0,f_{03})$ 
bounded by the lower bound  $4f_0 \le f_{03}$, $f_{03} \ge 53$,
and a line with slope $12$ going through $(9,64)$.
We have hence proved the following.
\begin{lemma}\label{lem:lower_set}
There exists a $4$-polytope $P$  with $f_0(P)=f_0$ and  $f_{03}(P)=f_{03}$ whenever
\begin{align*}
4f_0 \le f_{03} \le 12f_0 -44 \text{ and } f_{03} \ge 53.
\end{align*}
\end{lemma}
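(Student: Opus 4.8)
The plan is to establish Lemma~\ref{lem:lower_set} as a bookkeeping corollary of the explicit constructions already assembled, rather than by any new geometric idea. First I would fix the list of base polytopes: the $87$ concrete polytopal pairs $(f_0,f_{03})$ with $f_{03}\le 80$ recorded in Table~\ref{table:polytopalpairs}, each of which carries either a simplex facet, or a simple vertex, or both. These come from the Altshuler--Steinberg classification, from cyclic polytopes and their duals, from iterated pyramids over low-dimensional polytopes, from the $R_i(n)$ and $\delta^*_j(i,n)$ constructions of Sections~\ref{sec:genstacking}--\ref{sec:facetsplitting}, and from the square-pyramid-stacking and bipyramid-splitting moves of Sections~\ref{sec:stacking} and~\ref{sec:bipyramid}. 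The point of insisting on a simplex facet \emph{and} a simple vertex is that, by the discussion in Section~\ref{sec:stacking}, once a polytope has both features then every polytope produced from it by stacking on a simplex facet or truncating a simple vertex again has both; so the set of reachable pairs is closed under the two operations $(f_0,f_{03})\mapsto(f_0+1,f_{03}+12)$ and $(f_0,f_{03})\mapsto(f_0+3,f_{03}+12)$, hence under $(f_0,f_{03})\mapsto(f_0+2m+n,\,f_{03}+12n)$ for all $n\ge 0$ and $0\le m\le n$.

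The combinatorial heart of the argument is then the claim, already asserted in the text preceding the lemma, that from the base pairs one reaches, for every $f_0\ge 9$, \emph{all} pairs with $53\le f_{03}\le 64$ and $4f_0\le f_{03}$, using polytopes that still have a simple vertex and a simplex facet. I would verify this by a direct but finite case analysis: the strip $53\le f_{03}\le 64$ meets only finitely many residue/vertex combinations, and for each target $(f_0,f_{03})$ in the strip one exhibits a base pair $(f_0',f_{03}')$ from the list with $f_{03}-f_{03}'=12n$ and $f_0-f_0'=2m+n$ for suitable $0\le m\le n$; the pairs in Table~\ref{table:polytopalpairs} in the ``simplex facet and simple vertex'' block (the $4$-simplex $(5,20)$, the double pyramids $(6,26),(7,32),(8,38)$, etc., together with the duals of the $P_i$) were chosen precisely so that their $f_{03}$-values hit every residue mod $12$ in a low range and their $f_0$-values are small enough to leave room. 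One also checks that the lower-bound constraint $4f_0\le f_{03}$ is preserved: since each stacking step adds $(2,0)$ to $(f_0,\tfrac14 f_{03})$ in the worst case and truncation adds $(3,3)$, the quantity $f_{03}-4f_0$ only increases under truncation and stays put under simplex-stacking, so starting from base pairs on or above the line $f_{03}=4f_0$ we never drop below it.

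With the strip $53\le f_{03}\le 64$, $f_0\ge 9$, $4f_0\le f_{03}$ populated by polytopes carrying both a simple vertex and a simplex facet, the final step is to propagate upward. Apply the operation $(f_0,f_{03})\mapsto(f_0+2m+n,f_{03}+12n)$ to each such pair: from a pair with $f_{03}=c$ in the strip we obtain every $(f_0+2m+n,\,c+12n)$ with $0\le m\le n$. Given a target $(f_0,f_{03})$ with $f_{03}\ge 53$, write $f_{03}=c+12n$ with $53\le c\le 64$ and $n\ge 0$; then $(f_0,f_{03})$ is reachable from the strip-pair $(f_0-2m-n,\,c)$ provided such a strip-pair exists, i.e.\ provided $f_0-2m-n\ge 9$ for some $0\le m\le n$ and $4(f_0-2m-n)\le c$. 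Choosing $m=0$ gives the condition $f_0-n\ge 9$ and $4(f_0-n)\le c$; eliminating $c$ and $n$ via $c=f_{03}-12n\le 64$ and $c\ge 53$ yields exactly the constraint $f_{03}\le 12f_0-44$ together with $f_{03}\ge 53$, after checking that the residue of $f_{03}$ modulo $12$ can always be absorbed by the freedom in choosing $c\in\{53,\dots,64\}$ and that $m$ can be increased when $f_0-n$ overshoots $9$. I would write this out as an explicit choice of $n$ and $m$ in terms of $f_0$ and $f_{03}$ and then confirm the three inequalities $0\le m\le n$, $f_0-2m-n\ge 9$, $4(f_0-2m-n)\le f_{03}-12n\le 64$ hold under the hypotheses of the lemma. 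The main obstacle is not conceptual but organizational: making sure the finite list of base polytopes genuinely covers every residue class and every ``corner'' of the strip, so that no small pair with $53\le f_{03}\le 12f_0-44$ is missed; this is where the completeness of the Altshuler--Steinberg enumeration and the careful selection of the $87$ tabulated pairs does the work, and I would present the verification as a short table or a case split on $f_{03}\bmod 12$.
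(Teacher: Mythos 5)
Your proposal follows essentially the same route as the paper: populate the strip $f_0\ge 9$, $53\le f_{03}\le 64$, $4f_0\le f_{03}$ with polytopes carrying both a simplex facet and a simple vertex (via Table~\ref{table:polytopalpairs} and the Altshuler--Steinberg classification), then propagate with $(f_0,f_{03})\mapsto(f_0+2m+n,\,f_{03}+12n)$ to reach everything between the line $f_{03}=4f_0$ and the slope-$12$ line through $(9,64)$, i.e.\ $f_{03}=12f_0-44$. One small bookkeeping slip: stacking on a simplex facet adds $(1,12)$ to $(f_0,f_{03})$ and truncating a simple vertex adds $(3,12)$, so $f_{03}-4f_0$ increases by $8$ under stacking and is preserved under truncation (you have the two roles swapped), but your conclusion that this quantity never decreases---hence that the lower bound $4f_0\le f_{03}$ is maintained---still stands.
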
 
 
\begin{figure}[htbp]
  \centering
  \includegraphics[width=13.5cm]{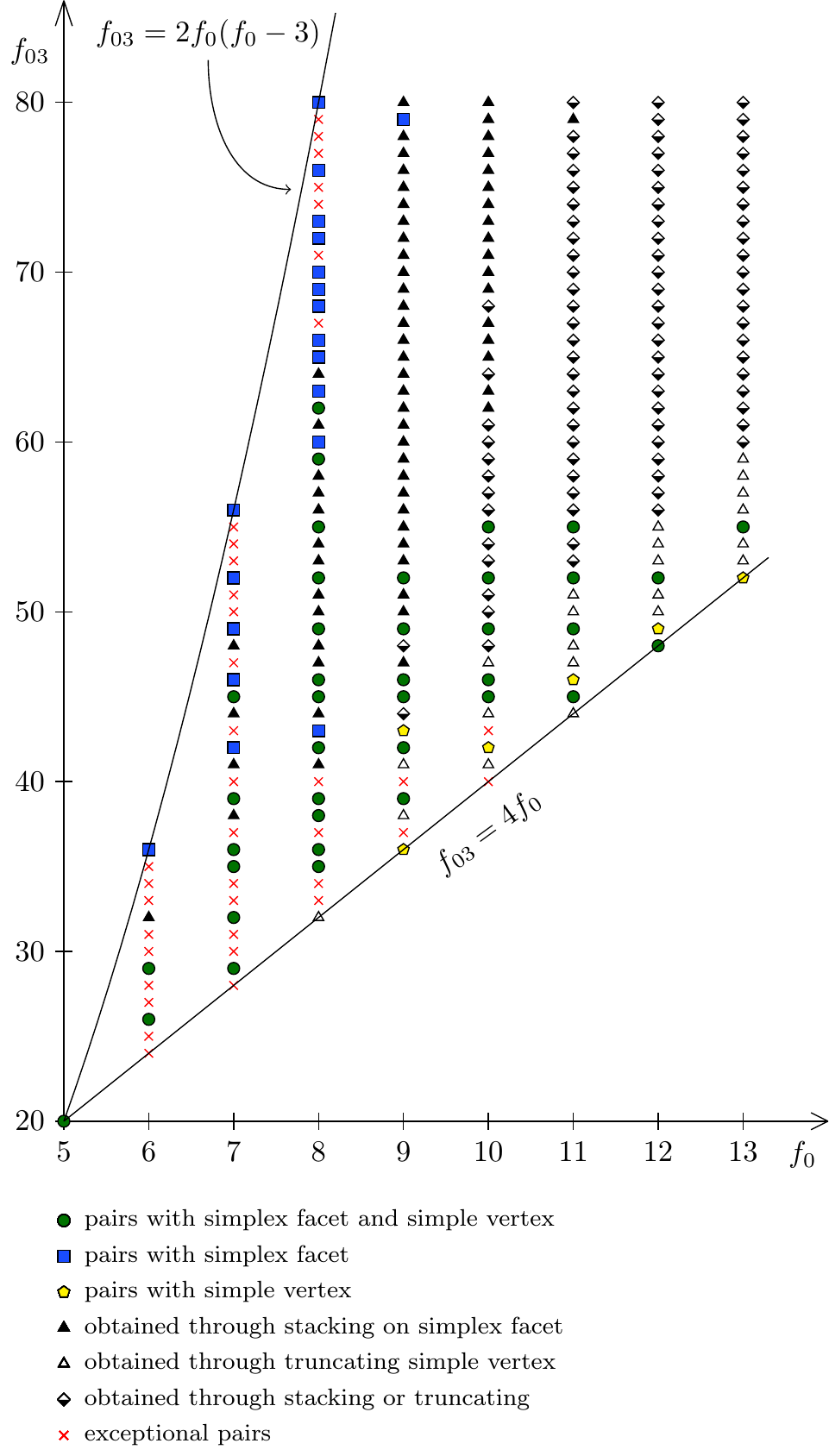}
  \caption{Polytopal pairs with $f_{03} \le 80$}
  \label{plot:f0f03_constr}
\end{figure}
In the next step we construct polytopes with
$12f_0 -44 \le f_{03} \le 2f_0(f_0 -3)$. 
In order to do so, we give examples of polytopes with simplex facet close to the upper bound.
The cyclic polytopes have polytopal pairs 
\begin{align*}
(f_0(C_4(n)),f_{03}(C_4(n)))&=(n,2n(n-3)),\\
(f_0(C_4(n+1)),f_{03}(C_4(n+1)))&=(n+1,2n(n-3)+4n-4).
\end{align*}
Our goal is to find polytopes with tetrahedral facets and polytopal pair
\[
	(n+1,2n(n-3)+i), \text{ for } i=0, \dots , 4n-5.
\]
If we find such polytopes, combined with the stacking and truncating operations from Section~\ref{sec:stacking} 
this gives us all remaining polytopal pairs.
In fact, by Lemma~\ref{thm:exceptions}, there are no polytopes with 
polytopal pair $(n+1,2n(n-3)+4n-k)$ for $k \in \{5,6,7,9,10,13,17\}$.
In these cases, the ``next'' polytope in the stacking process, 
a polytope with polytopal pair $(n+2,2n(n-3)+4n-k+12$) for $k \in \{ 5,6,7,9,10,13\}$,
is given for $m=n+1$ by the polytope with $(m+1,2m(m-3) +16-k)$.
For $k=17$, the polytope with polytopal pair $(m+1,2m(m-3) -1)$
can be obtained through stacking a vertex onto two facets of $\delta^* _3(n-3,n)$ 
with polytopal pair $(n+1,2n(n-3)+4n-21)=(m,2m(m-3)-17)$
(see Section~\ref{sec:dualcyclic}).
Stacking a vertex onto $\delta^* _3(n-3,n)$, 
such that the vertex is beyond two simplex facets which have a common $2$-face, yields a new 
polytope with $16$ more vertex-facet incidences and one additional
vertex (cf.\ Section~\ref{sec:genstacking}).
So the new polytope has the required polytopal pair $(m+1,2m(m-3) -1)$.
 
To find examples of polytopes with $(f_0,f_{03})=(n+1,2n(n-3)+i)$, for $f_0 = n+1 \ge 9$,
$i=0, \dots , 4n-4$, $i \ne 4n -j$ for $j \in \{ 5,6,7,9,10,13,17\}$,
we  use the constructions from Sections~\ref{sec:genstacking} and \ref{sec:facetsplitting}.
Table~\ref{table:(n+1,2n(n-3)+i)} shows how the polytopes are constructed.
\begin{table}
\centering
\begin{tabular}{l|l|l} 
\multirow{10}{*}{ $f_{03} \equiv 0$ mod 4}
& $f_{03}$ & Example of polytope\\ \hline
&$2n(n-3)$& stack onto $\Delta _3$-facet of $R_{n-7}(n-1)$ \\ \cline{2-3}
&$2n(n-3)+4$& stack onto $\Delta _3$-facet of $R_{n-6}(n-1)$ \\ \cline{2-3}
&$2n(n-3)+8$& stack onto $\Delta _3$-facet of $R_{n-5}(n-1)$ \\ \cline{2-3}
&$2n(n-3)+12$& stack onto $\Delta _3$-facet of $C_4(n)$\\ \cline{2-3}
&$2n(n-3)+16$& $R_{2}(n)$ \\ \cline{2-3}
&$2n(n-3)+20$& $R_{3}(n)$ \\ \cline{2-3}
&\dots & \dots \\ \cline{2-3}
&$2n(n-3)+4n-8$& $R_{n-4}(n)$  \\ \cline{2-3}
&$2n(n-3)+4n-4$& $C_4(n+1)$ \\ \hline
 \hline
\multirow{6}{*}{ $f_{03} \equiv 1$ mod 4}
&$2n(n-3)+1$ &  stack onto $\Delta _3$-facet of $\delta^* _1(n-4,n-1)$ \\ \cline{2-3}
&$2n(n-3)+5$ & stack onto $\Delta _3$-facet of $\delta^* _1(n-3,n-1)$ \\ \cline{2-3}
&$2n(n-3)+9$ &  $\delta^* _1(3,n)$\\ \cline{2-3}
&\dots & \dots \\ \cline{2-3}
&$2n(n-3)+4n-11$ &  $\delta^* _1(n-2,n)$ \\ \cline{2-3} 
&$2n(n-3)+4n-7$& does not exist\\ \hline \hline
\multirow{6}{*}{ $f_{03} \equiv 2$ mod 4}
&$2n(n-3)+2$& stack onto $\Delta _3$-facet of $\delta^* _2(n-3,n-1)$ \\ \cline{2-3}
&$2n(n-3)+6$& $\delta^* _2(3,n)$ \\ \cline{2-3}
&\dots & \dots \\ \cline{2-3}
&$2n(n-3)+4n-14$& $\delta^* _2(n-2,n)$ \\ \cline{2-3}
&$2n(n-3)+4n-10$& does not exist\\ \cline{2-3}
&$2n(n-3)+4n-6$& does not exist \\ \hline \hline
\multirow{7}{*}{ $f_{03} \equiv 3$ mod 4}
&$2n(n-3)+3$&  $\delta^* _3(3,n)$\\ \cline{2-3}
&\dots & \dots \\ \cline{2-3}
&$2n(n-3)+4n-21$& $\delta^* _3(n-3,n)$\\ \cline{2-3}
&$2n(n-3)+4n-17$& does not exist   \\ \cline{2-3}
&$2n(n-3)+4n-13$& does not exist   \\ \cline{2-3}
&$2n(n-3)+4n-9$& does not exist  \\ \cline{2-3}
&$2n(n-3)+4n-5$& does not exist \\  
\end{tabular}
\caption{Polytopal pairs $(n+1,2n(n-3)+i)$, $n \ge 8$}
\label{table:(n+1,2n(n-3)+i)}
\end{table}
 
For $f_0 \le 8$  we use the fact that polytopes with up to $8$ vertices have been classified
(see Table~\ref{table:polytopalpairs}).
In particular, we can construct examples of polytopes with simplex facet, 
simple vertex and polytopal pair
\[
	(n+2,2n(n-3)+i), \text{ for all } i=0, \dots , 4n-5, \ n \ge 7.
\]
If we now inductively stack vertices on simplex facets and truncate simple vertices,
we obtain polytopes with polytopal pairs $(f_0,f_{03})$ with $f_0 \ge 9$
bounded from above by $2f_0(f_0-3)$ and from below by a line of slope 4,
going through $(9,56)$.
So we have found all polytopal pairs with
\begin{align}\label{ineq:upper_set} 
4f_0 +20 \le f_{03} \le 2f_0(f_0-3)
\end{align}
for all $f_0 \ge 9$, with the only exceptions for each value of $f_0$ being the $7$ 
pairs mentioned above.
Lemma~\ref{lem:lower_set} and   Inequality~(\ref{ineq:upper_set}) together give all pairs 
$(f_0,f_{03})$ with $f_0 \ge 9$, $f_{03} \ge 53$ 
within the bounds, excluding the exceptional pairs.
Since we classified all possible polytopal pairs with $f_{03} \le 80$,
and  in particular all polytopal pairs with $f_0 \le 8$, we have now proved Theorem~\ref{main_thm1}.

\subsection{Other flag vector pairs}\label{sec:otherflagpairs}
The flag vector of a $4$-polytope has $16$ entries.
Besides $f_{\emptyset} = 1$, the following nine entries depend on only one other entry:
\begin{alignat*}{6}
f_{01} &= 2f_1,&&f_{12}&&= f_{02}, &&f_{13} &&= f_{02},\\
f_{23} &= 2f_2, &&f_{012} &&= 2f_{02}, \ \ &&f_{013} &&= 2f_{02},\\
f_{023} &= 2f_{02}, \ \ &&f_{123} &&= 2f_{02}, &&f_{0123} &&= 4f_{02}.
\end{alignat*}
These equations are some of the  Generalized Dehn--Sommerville equations for $4$-dimensional polytopes 
(Lemma~\ref{genDehnSommerville}). To obtain all $2$-dimensional coordinate projections of the flag vectors 
of $4$-polytopes, we therefore only have to consider the six entries
$f_0$, $f_1$, $f_2$, $f_3$, $f_{02}$ and $f_{03}$.
We still need to determine the projections 
\[
	\PiFset{4}{0,02}{}, \PiFset{4}{1,02}{}, \PiFset{4}{1,03}{} \text{ and }\PiFset{4}{02,03}{}.
\]
All other cases have already been done, or they follow directly, 
either by duality or by the linear dependence on a single entry.

For the projections $\PiFset{4}{0,02}{}$ and $\PiFset{4}{1,02}{}$, 
the pairs $(f_0,f_{02})$ in $\PiFset{4}{0,02}{}$  satisfy the fairly obvious bounds 
$6f_0\le f_{02}\le 3f_0(f_0-3)$. 
Equality holds for simple and neighborly polytopes, respectively.
Similarly the pairs $(f_1,f_{02})$ in $\PiFset{4}{1,02}{}$
satisfy
$3f_1\le f_{02} \le 6f_1-3\sqrt{8f_1+1}-3$,
with equality for 2-simple polytopes (each edge is contained in exactly 3 facets)
and neighborly polytopes, respectively.

The projection sets  $\PiFset{4}{1,03}{} $ and  $\PiFset{4}{02,03}{}$ are more difficult to describe even approximately. 
Upper bounds for $f_{03}$ in terms of $f_1$ are achieved for neighborly polytopes,
and in terms of $f_{02}$ for center boolean polytopes.
The problem of finding tight lower bounds for $f_{03}$  in terms of $f_1$ and $f_{02}$ is related to 
the open problem of finding an upper bound for the \emph{fatness}
$F=\frac{f_1+f_2-20}{f_0+f_3-10}$
of a polytope~\cite{EKZ03}.%

\section{Face vector pair $(f_0,f_{d-1})$ for $d$-polytopes}\label{sec:f0fd-1}

Now we work towards analogous results in higher dimensions. In one instance, recently
the projection $\PiFset{5}{0,1}{}$ of the $f$-vector of $5$-polytopes to $(f_0,f_1)$ was determined almost simultaneously by Kusunoki and Murai~\cite{KusunokiMurai} and by Pineda-Villavicencio, Ugon and 
Yost~\cite{PVUY17}.

We consider $\PiFset{d}{0,d-1}{}$, the projection of the set of $f$-vectors of $d$-polytopes to $(f_0,f_{d-1})$.
 In the following, for given $d$, we will consider pairs of integers $(n,m)$
and analyze under which conditions there are $d$-polytopes with $n$ vertices and $m$ facets.

\begin{definition}
For fixed dimension $d$, a pair $(n,m)\in\N^n$ is \emph{$d$-large} if $n+m\ge \binom{3d+1}{\floor{d/2}}$; 
it is \emph{$d$-small} otherwise. A pair $(n,m)$ will be called an \emph{exceptional pair} if
$m\le f_{d-1}(C_d(n))$ and $n\le f_{d-1}(C_d(m))$, and if there is no $d$-polytope with $n$ vertices and $m$ facets.
\end{definition}

The situation looks as follows:
\begin{enumerate}
\item[(1)]
If $P$ is a $d$-polytope with $n$ vertices and $m$ facets,
then 
\[
	m\le f_{d-1}(C_d(n)), \ n\le f_{d-1}(C_d(m)).
\]
\item[(2)]
If $(n,m)$ is a pair of integers, $n,m \ge d+1$ such that 
for a given  dimension $d$,\\
$m\le f_{d-1}(C_d(n)), \ n\le f_{d-1}(C_d(m))$,
then there \emph{usually} exists a $d$-polytope with $n$ vertices and $m$ facets:
\begin{itemize}
\item[(2.1)]For $d\le 4$ no exceptional pairs exist.

\item[(2.2)] For even $d\ge 6$, only finitely many exceptional pairs exist, all of which are $d$-small
(see Figure~\ref{plot:f0,fd-1}a).

\item[(2.3)] For odd $d\ge 5$, there exist finitely many $d$-small exceptional pairs and additionally 
infinitely many $d$-large exceptional pairs
for $m$ odd and  $f_{d-1}(C_d(n-1))<m<f_{d-1}(C_d(n))$
and for $n$ odd and $f_{d-1}(C_d(m-1))<n<f_{d-1}(C_d(m))$
(see Figure~\ref{plot:f0,fd-1}b).
\end{itemize}
\end{enumerate}
(1) are the UBT inequalities.

\noindent
(2.1) holds trivially for $d\le 2$. 
For dimension $3$, it is given by Steinitz' classification of all $3$-dimensional polytopes \cite{Steinitz3}. 
For dimension $4$, 
this is Theorem~\ref{thm:f0f3}.

\noindent
(2.2) is Theorem~\ref{thm:f0fd-1_evendim} and (2.3) is Theorem~\ref{thm:f0fd-1_odddim}.

\begin{figure}
  \centering
  \includegraphics[width=.8\linewidth]{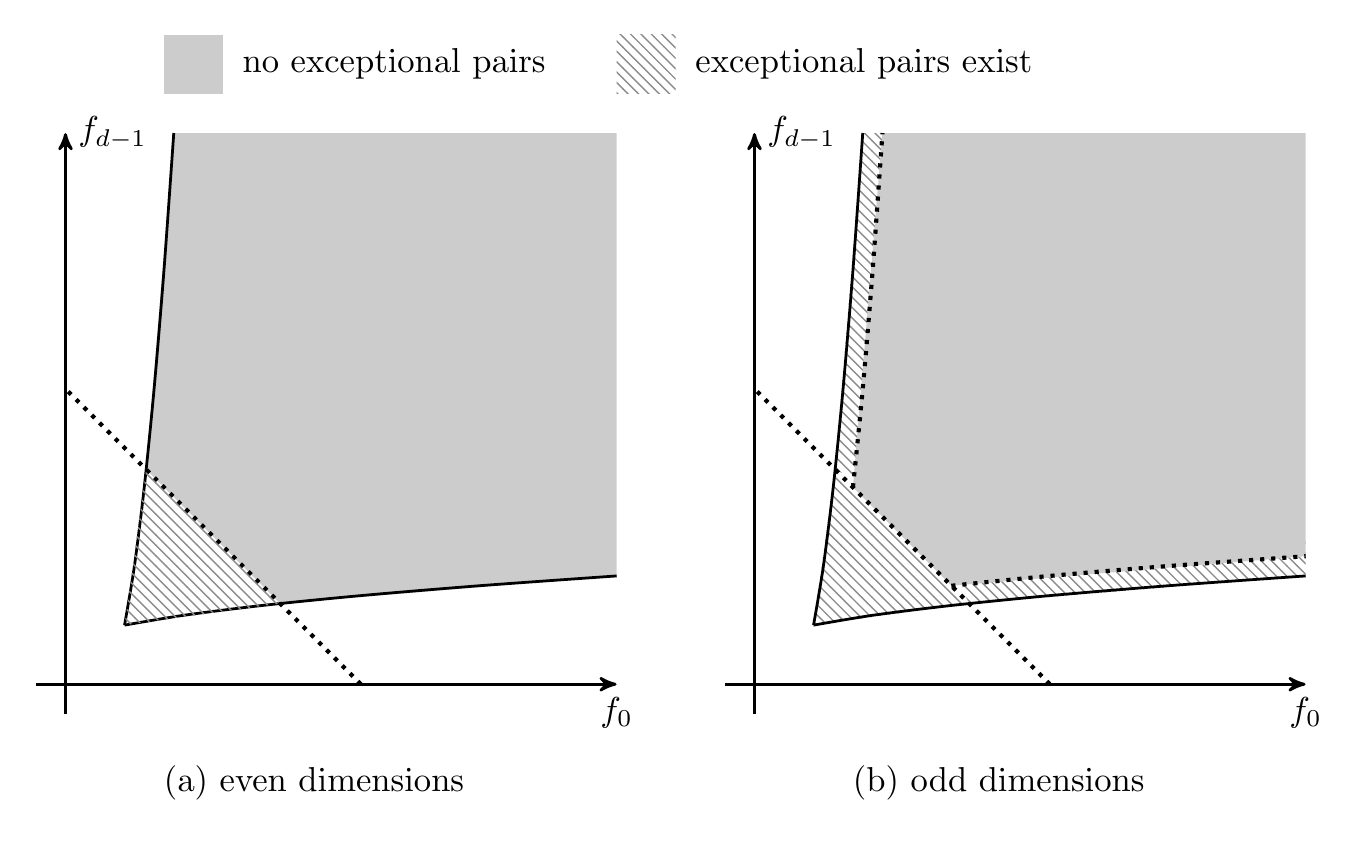} 
\caption{Projections $\PiFset{d}{0,d-1}{}$}
\label{plot:f0,fd-1}
\end{figure}

\begin{theorem}\label{thm:f0fd-1_evendim}
Let $d\ge 2$ be even and $(n,m)$ $d$-large.
Then there exists a $d$-polytope $P$ with $n$ vertices and $m$ facets if and only if
\[
	m\le f_{d-1}(C_d(n)) \text{ and } n\le f_{d-1}(C_d(m)).
\]
The first inequality holds with equality if and only if $P$ is neighborly,
and the second inequality holds with equality if and only if $P$ is dual-neighborly.

However, for $d\ge 6$ \ $d$-small exceptional pairs $(n,m)$ exist. 
\end{theorem}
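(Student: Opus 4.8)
The plan is to split the statement into two parts: the ``positive'' characterization for $d$-large pairs, and the existence of $d$-small exceptional pairs for $d\ge6$. For the positive part, I would first reduce to constructing $d$-polytopes realizing every $d$-large pair $(n,m)$ inside the UBT box. The two boundary cases are immediate: a neighborly $d$-polytope with $n$ vertices realizes the pair $(n,f_{d-1}(C_d(n)))$, and its dual realizes $(f_{d-1}(C_d(m)),m)$ on the other side; these also pin down the equality statements, since by the Upper Bound Theorem $f_{d-1}(P)=f_{d-1}(C_d(n))$ forces $P$ to be neighborly (and dually). For the interior, the natural tool is the same inductive machinery as in the $4$-dimensional case: start from neighborly and dual-neighborly polytopes and move inward by operations that change $(f_0,f_{d-1})$ in controlled small steps, e.g.\ stacking on a simplex facet (which for a $d$-polytope sends $(n,m)\mapsto(n+1,m+d-1)$) and its dual, truncating a simple vertex (sending $(n,m)\mapsto(n+d-1,m+1)$). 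Since $d$ is even and cyclic polytopes are simplicial with simple duals, these operations are always available along the constructed families. The point of the hypothesis $n+m\ge\binom{3d+1}{\lfloor d/2\rfloor}$ is exactly to guarantee that the ``shadow'' of reachable pairs from these seeds covers the entire UBT box above that threshold; below it the inductive reach has gaps, which is where the small exceptional pairs live.

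The main obstacle in the positive part is covering pairs near the ``diagonal'' $n\approx m$, far from both boundaries of the UBT box. Stacking and truncating alone move roughly parallel to the two boundary curves, so one needs intermediate constructions — analogues of the facet-splitting / generalized-stacking operations from Sections~\ref{sec:genstacking}--\ref{sec:facetsplitting}, producing polytopes with a prescribed moderate number of facets and vertices simultaneously — together with direct products and free sums to interpolate. A clean way to organize this: fix $n$, and show that the set of $m$ with $(n,m)$ polytopal is an interval $[n', f_{d-1}(C_d(n))]$ whenever $n+m$ is large; prove the interval property by a ``one facet at a time'' argument (each facet-splitting step decreases $f_{d-1}$ by a bounded amount while keeping $f_0$ fixed or nearly fixed), and bound $n'$ from above by the $d$-largeness threshold. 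The bookkeeping here is dimension-uniform but the verification that no gaps remain for $n+m\ge\binom{3d+1}{\lfloor d/2\rfloor}$ is the technical heart.

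For the second assertion — that $d$-small exceptional pairs exist for even $d\ge6$ — it suffices to exhibit, for each such $d$, one pair $(n,m)$ with $n,m$ small, satisfying $m\le f_{d-1}(C_d(n))$ and $n\le f_{d-1}(C_d(m))$, for which no $d$-polytope exists. I would look just above the simplex: take $n=d+2$ vertices. A $d$-polytope with $d+2$ vertices is completely classified (it is a repeated pyramid over a free sum of two simplices), so its possible facet numbers form an explicit short list; by UBT, $m$ can be as large as $f_{d-1}(C_d(d+2))$, which for $d\ge6$ exceeds every value on that list, so the first ``missing'' $m$ gives an exceptional pair, and one checks $n=d+2\le f_{d-1}(C_d(m))$ holds trivially since $m$ is already fairly large. (For $d=4$ this gap does not occur — consistent with Theorem~\ref{thm:f0f3} — which is why the phenomenon starts at $d=6$; the parity of $d$ enters because for $d$ even the cyclic polytope is simplicial, so $f_{d-1}(C_d(d+2))$ grows fast enough relative to the $(d+2)$-vertex list precisely when $d\ge6$.) The remaining care is to confirm $n+m<\binom{3d+1}{\lfloor d/2\rfloor}$ for the exhibited pair, i.e.\ that it genuinely lies below the $d$-large threshold; this is a routine comparison of binomial coefficients, and I expect it to hold with room to spare.
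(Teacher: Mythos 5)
Your overall architecture (UBT for necessity, boundary seeds plus inward-moving operations for sufficiency, and a counting argument at $n=d+2$ for the small exceptional pairs) matches the paper's, and your treatment of the equality cases and of the small exceptional pairs is essentially the paper's pigeonhole argument --- though as written your claim that $f_{d-1}(C_d(d+2))$ ``exceeds every value on that list'' cannot be literally correct, since the cyclic polytope itself is on the list. What the paper actually shows is that for $d=2k$ there are $k$ admissible values of $m$ with $k^2+k+1<m\le k^2+2k+1$, but at most $k-1$ combinatorial types of $d$-polytopes with $d+2$ vertices and more than $k^2+k+1$ facets (non-simplicial ones are pyramids with at most $k^2+k+1$ facets, and one of the $k$ simplicial types is stacked), so at least one value of $m$ is missed. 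Your version needs this count made explicit to go through.

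The genuine gap is in the positive part. You correctly identify that stacking and truncating alone cannot fill the UBT box, and you defer the filling to ``analogues of the facet-splitting / generalized-stacking operations'' and an interval property proved ``one facet at a time''; but no such general $d$-dimensional construction is exhibited, and this deferred step is exactly the heart of the theorem. The key idea your proposal is missing is the sufficiency half of the $g$-theorem (Billera--Lee): for even $d=2k$ there exist simplicial $d$-polytopes with $n$ vertices, $g_i=g_i(C_d(n))$ for $1\le i\le k-1$, and $g_k=l$ for every $0\le l\le\binom{n-k-2}{k}$; since $g_k$ enters the expression for $f_{d-1}$ with coefficient $1$, this realizes every facet number in the interval from $f_{d-1}(C_d(n))-\binom{n-k-2}{k}$ up to $f_{d-1}(C_d(n))$ with exactly $n$ vertices. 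For $n\ge 7k+2$ this interval reaches down past $f_{d-1}(C_d(n-1))$, so consecutive intervals overlap and the entire band below the UBT curve is covered; only then do stacking, truncating and duality extend the coverage inward, and the threshold $\binom{3d+1}{\lfloor d/2\rfloor}$ is computed precisely from where the resulting line of slope $\frac{1}{d-1}$ through $\bigl(7k+3,\,f_{d-1}(C_{2k}(7k+1))+d-1\bigr)$ meets the diagonal $m=n$. Without the $g$-theorem input (or an equally strong substitute), your interval property is unsupported and the sufficiency direction is not proved.
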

\begin{proof}
The necessity of the conditions and the equality cases are  direct consequences of the upper bound theorem 
(McMullen~\cite{McMullen}).
For the sufficiency,
consider the $g$-vector of simplicial polytopes.

The $\frac d2$th entry of the $g$-vector of
 a cyclic polytope $C_{d}(n)$ in even dimension $d=2k$ with $n$ vertices is
\[
	g_{d/2}(C_{d}(n))=g_k(C_{2k}(n))=\binom{n-k-2}{k}.
\]
A consequence of the sufficiency part of the $g$-theorem (Billera \& Lee~\cite{BilleraLee1,BilleraLee2}) is that
there exist simplicial $2k$-polytopes with $n$ vertices, $g_i =g_i(C_{2k}(n))$ and $g_k=l$
for all $1\le i \le k-1$ and for all $0 \le l \le
 \binom{n-k-2}{k}$.
 \newline
For all simplicial $2k$-polytopes, 
\[
	f_{2k-1}=(2k+1)+g_1(2k-1)+g_2(2k-3)+\dots +g_{k-1}\cdot 3+g_k.
\]
Hence, there exist simplicial $2k$-polytopes with $n$ vertices and $f_{2k-1}(C_{2k}(n))-l$ facets,
for $0 \le l \le \binom{n-k-2}{k}$.
Observe that $\binom{n-k-2}{k}> f_{2k-1}(C_{2k}(n))-f_{2k-1}(C_{2k}(n-1))$ for large $n$.
In particular, this inequality holds for 
$n\ge 7k+2 =\frac 7 2 d +2$.
This means that for $n \ge \frac 7 2 d +2$ there are simplicial $2k$-polytopes with $n$ vertices and 
$m$ facets, for all integers $m$ such that $f_{2k-1}(C_{2k}(n-1))\le m \le f_{2k-1}(C_{2k}(n))$.
Now we can stack a vertex on a facet of each of these polytopes and obtain polytopes
with one more vertex and $d-1$ more facets. The new polytope has a simple vertex and simplex facets,
so we can repeatedly stack vertices on simplex facets and truncate simple vertices.
Truncating simple vertices gives a polytope with $d-1$ more (simple) vertices and one additional (simplex) facet.
Consider the pair $(7k+2,m)$: We have just seen that this pair is not an exceptional pair as long as $m\ge f_{2k-1}(C_{2k}(7k+1))$.
Stacking a vertex on a facet of a polytope with pair $(7k+2, f_{2k-1}(C_{2k}(7k+1)))$ gives a polytope with 
simplex facet, simple vertex and pair $(n_0, m_0):=(7k+3, f_{2k-1}(C_{2k}(7k+1))+d-1)$. 
\newpage
\noindent
Consider the line $\ell_1$ of slope $\frac{1}{d-1}$ through $(n_0, m_0)$.
There are no exceptional pairs with $n\ge n_0$ above $\ell_1$.
The line $\ell_1$ intersects the line $\ell_2:m=n$ in a pair $(n,n)$ such that 
\[
	n=\frac{k+1}{12k+2}\binom{6k+1}{k}< \frac{1}{2}\binom{6k+1}{k}.
\] 
Together with the dual polytope, we have obtained all polytopes with pairs $(n,m)$
within the bounds such that  
\[
	\binom{3d+1}{\frac d 2}\le n+m.
\]
Hence, there are no $d$-large exceptional pairs. 
 
On the other hand, there are exceptional pairs for $d$-small $(n,m)$.
As an example consider $d$-polytopes with $d+2$ vertices.
All $d$-polytopes with $d+2$ vertices are simplicial or (multiple) pyramids over some $r$-polytope with $r+2$ vertices \cite[Sect.~6.5]{Ziegler}.

There are exactly $\floor{\frac{d}{2}}=k$ different combinatorial types of simplicial $d$-polytopes with $d+2$ vertices
(\cite[Sect.~6.1]{Grunbaum}).
One of these types is the stacked polytope with $2d$ facets. In particular, for $d\ge 6$, $2d \le k^2+k+1$.
Any non-simplicial $d$-polytope with $d+2$ vertices is a pyramid and has thus at most
$f_{d-1}(\text{Pyr}(C_{d-1}(d+1)))=f_{d-2}(C_{d-1}(d+1))+1=k^2+k+1$ facets for $d=2k$. 
This means that there are at most $k-1$ different combinatorial types of $(2k)$-polytopes with $2k+2$ vertices
and more than $k^2+k+1$ facets.

The cyclic polytope with $d+2$ vertices has $k^2+2k+1$ facets for even dimensions $d=2k$. 
So there are $k$ pairs $(n,m)$ for given $n$ and 
$k^2+k+1 < m\le f_{d-1}(C_d(n))$, but at most $k-1$ combinatorially non-equivalent polytopes.
Therefore, for  $n=d+2$ and even $d\ge 6$ there must be at least one exceptional pair.
\qed
\end{proof}

An example is the pair $(n,m)=(8,14)$ for dimension $6$: There is no 
$6$-polytope with $8$ vertices and $14$ facets \cite{FMMclass},
but there are $6$-polytopes with $8$ vertices and $13$ or $15$ facets.

\begin{theorem}\label{thm:f0fd-1_odddim}
Let $d\ge 3$ be odd.
If $(n,m)$ is $d$-large, then there exist $d$-polytopes with $n$ vertices and $m$ facets if and only if 
\[
	m\le f_{d-1}(C_d(n)) \text{ and } n\le f_{d-1}(C_d(m))
\]
with $d$-large exceptional pairs occurring only for $d\ge 5$, if $m$ is odd and $f_{d-1}(C_d(n-1)\le m$
and if $n$ is odd and $f_{d-1}(C_d(m-1)\le n$.

However, for $d\ge 5$ \ $d$-small exceptional pairs $(n,m)$ exist.
\end{theorem}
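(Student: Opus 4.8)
Write $d=2k+1$. The necessity of the two UBT inequalities is immediate (McMullen~\cite{McMullen}), so the content is to say exactly which admissible $d$-large pairs fail to be polytopal and to produce the $d$-small exceptional pairs. The new ingredient, absent in even dimensions, is a \emph{parity obstruction}: for $d=2k+1$ the $h$-vector of a simplicial $d$-polytope has $d+1=2k+2$ entries, and $h_i=h_{d-i}$ pairs them up with no fixed index, so the number of facets $f_{d-1}=\sum_{i=0}^{d}h_i=2\sum_{i=0}^{k}h_i$ is even; in particular $f_{d-1}(C_d(n))=2\binom{n-k-1}{k}$ is even, and dually every simple $d$-polytope has an even number of vertices. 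Hence, whenever $f_{d-1}(C_d(n-1))\le m\le f_{d-1}(C_d(n))$ with $m$ odd, in fact $m<f_{d-1}(C_d(n))$ and any $d$-polytope with $n$ vertices and $m$ facets would have to be non-simplicial.

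To see that such a pair carries no polytope, I would establish the key lemma: for $d\ge5$ and $n$ above a threshold linear in $d$ --- which is automatic when $(n,m)$ is $d$-large with $m$ near $f_{d-1}(C_d(n))$ --- a non-simplicial $d$-polytope with $n$ vertices has at most $f_{d-1}(C_d(n-1))$ facets. The approach is by pulling: pulling a vertex lying on a non-simplex facet keeps the vertex set, never decreases and strictly increases the number of facets; iterating to a simplicial polytope $P'$ on $n$ vertices, one must show the total gain is at least the gap $f_{d-1}(C_d(n))-f_{d-1}(C_d(n-1))$ --- by estimating how many simplices a triangulated non-simplex facet contributes --- so that $f_{d-1}(P)\le f_{d-1}(P')-\bigl(f_{d-1}(C_d(n))-f_{d-1}(C_d(n-1))\bigr)\le f_{d-1}(C_d(n-1))$. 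Making this estimate tight and pinning the threshold is the main obstacle; note that the lemma \emph{fails} in dimension~$3$ (the square pyramid has $5$ vertices and $5>f_2(C_3(4))=4$ facets), which is precisely why no $d$-large exceptional pairs occur for $d=3$. Granting the lemma, a $d$-polytope with $n$ vertices and $m$ facets, $m$ odd and $m\ge f_{d-1}(C_d(n-1))$, would be non-simplicial, hence have at most $f_{d-1}(C_d(n-1))$ facets, forcing $m=f_{d-1}(C_d(n-1))$ --- an even number, a contradiction; duality gives the symmetric band in $n$.

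For every remaining $d$-large pair within the UBT bounds I would follow the proof of Theorem~\ref{thm:f0fd-1_evendim}. For even facet numbers the $g$-theorem produces simplicial $d$-polytopes with $n$ vertices, $g_j=g_j(C_d(n))$ for $j<k$, and $g_k$ any value in $\{0,\dots,\binom{n-k-2}{k}\}$, hence with $f_{d-1}$ equal to every even integer in $\bigl[\,f_{d-1}(C_d(n))-2\binom{n-k-2}{k},\ f_{d-1}(C_d(n))\,\bigr]$, an interval that for $n$ linear in $d$ reaches below $f_{d-1}(C_d(n-1))$; stacking a vertex on a simplex facet (changing $(f_0,f_{d-1})$ by $(1,d-1)$ and creating a simple vertex) and truncating a simple vertex (changing it by $(d-1,1)$) then propagate these pairs below a line of slope $\tfrac1{d-1}$ through a suitable point $(n_0,m_0)$, exactly as in the even-dimensional argument, and together with the dual construction this covers every pair with $n+m\ge\binom{3d+1}{\floor{d/2}}$ satisfying the UBT inequalities and with $m$ even (or $n$ even). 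Odd facet numbers require breaking parity: since $d-1$ is even, truncation flips the parity of $f_{d-1}$ while stacking preserves it, so an odd number of truncations starting from the even seeds above reaches exactly the pairs with $m$ odd and $m<f_{d-1}(C_d(n-1))$; alternatively one may seed with a pyramid $\mathrm{Pyr}(Q)$ over a simplicial $(d-1)$-polytope $Q$, which has $f_{d-1}=f_{d-2}(Q)+1$ of unconstrained parity (as $d-1$ is even) and only simplex facets. The finitely many small pairs, where the $g$-theorem interval does not yet reach $f_{d-1}(C_d(n-1))$, are handled directly.

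Finally, $d$-small exceptional pairs exist for odd $d=2k+1\ge5$. Take $n=d+2$: a $d$-polytope with $d+2$ vertices is simplicial or a pyramid over a $(d-1)$-polytope with $d+1$ vertices \cite[Sect.~6.5]{Ziegler}, and in the second case it has at most $f_{d-2}(C_{d-1}(d+1))+1=(k+1)^2+1$ facets, whereas $f_{d-1}(C_d(d+2))=(k+1)(k+2)$. The integer $m=(k+1)(k+2)-1$ is odd --- so not realized by any simplicial polytope --- and for $k\ge2$ it exceeds $(k+1)^2+1$ --- so not realized by any pyramid; since $(d+2,m)$ satisfies the UBT inequalities and $(d+2)+m$ lies far below $\binom{3d+1}{\floor{d/2}}$, it is a $d$-small exceptional pair. (For $k=1$, i.e.\ $d=3$, one has $(k+1)^2+1=(k+1)(k+2)-1$ and no such pair appears, consistent with Steinitz' theorem.)
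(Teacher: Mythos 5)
Your overall architecture (parity of $f_{d-1}$ for simplicial polytopes in odd dimension via Dehn--Sommerville, the $g$-theorem plus stacking/truncating and duality for the constructive part, and the pyramid bound at $n=d+2$ for the $d$-small exceptional pairs) matches the paper, and those parts are essentially sound. The genuine gap is your ``key lemma'': that for $d\ge5$ and $n$ large, a non-simplicial $d$-polytope with $n$ vertices has at most $f_{d-1}(C_d(n-1))$ facets. This is false. The paper's own parity-breaking construction already refutes it: placing a new vertex beyond one facet of a simplicial polytope with $n-1$ vertices and $f_{d-1}(C_d(n-1))$ facets, inside the affine hull of a second facet, and beneath all others yields a non-simplicial $d$-polytope with $n$ vertices and $f_{d-1}(C_d(n-1))+d-2$ facets. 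More decisively, the (tight) upper bound theorem for almost simplicial polytopes of Nevo, Pineda-Villavicencio, Ugon and Yost shows that the correct maximum for non-simplicial polytopes is $f_{d-1}(C_d(n))-\lfloor d/2\rfloor$: the deficit is a constant, not the gap $f_{d-1}(C_d(n))-f_{d-1}(C_d(n-1))=2\binom{n-k-2}{k-1}$, which grows like $n^{k-1}$. Your pulling argument cannot rescue the lemma, since triangulating a single non-simplex facet gains only a bounded number of facets, nowhere near that gap.

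Consequently your conclusion that \emph{every} $d$-large pair with $m$ odd and $f_{d-1}(C_d(n-1))\le m<f_{d-1}(C_d(n))$ is exceptional overshoots both the theorem and what is known. The theorem only asserts that exceptional pairs are \emph{confined} to that band (and its dual); the paper proves exceptionality only for odd $m$ in the much narrower window $f_{d-1}(C_d(n))-\lfloor d/2\rfloor<m<f_{d-1}(C_d(n))$, by reducing an arbitrary non-simplicial polytope to an almost simplicial one by pulling and then citing the almost-simplicial UBT, and it explicitly leaves the intermediate odd values of $m$ open in the concluding remark. You need to replace your lemma by that bound and weaken the exceptionality claim accordingly. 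Two smaller slips: a pyramid over a simplicial $(d-1)$-polytope $Q$ has $Q$ itself as a non-simplex facet unless $Q$ is a simplex, so it is not a valid seed ``with only simplex facets'' (the paper instead uses the generalized stacking above, which adds $d-2$ facets and leaves one triangular bipyramid facet); and the bookkeeping showing that your parity-flipping truncations actually reach all odd $m\le f_{d-1}(C_d(n-1))$ down to the $d$-large threshold is asserted rather than carried out.
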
 

\begin{proof}
The  necessity follows again from the upper bound theorem (McMullen~\cite{McMullen}).
For the sufficiency, 
we follow the proof of Theorem~\ref{thm:f0fd-1_evendim}.

A cyclic polytope $C_{d}(n)$ in odd dimension $d=2k+1$ with $n$ vertices has $g_{\floor{\frac{d}{2}}}$ equal to
\[
	g_{\floor{\frac{d}{2}}}(C_{d}(n))=g_k(C_{2k+1}(n))=\binom{n-k-3}{k}.
\]
Again, by the $g$-theorem~\cite{BilleraLee1,BilleraLee2,Stanley},
there exist simplicial $(2k+1)$-polytopes with $n$ vertices, $g_i =g_i(C_{2k}(n))$ and $g_k=l$
for all $1\le i \le k-1$ and for all $0 \le l \le \binom{n-k-3}{k}$.

For all simplicial $(2k+1)$-polytopes, 
\[
	f_{d-1}=(d+1)+g_1(d-1)+g_2(d-3)+\dots +g_{k-1}\cdot 4+g_k \cdot 2.
\]
Hence, there exist simplicial $(2k+1)$-polytopes with $n$ vertices and $f_{2k}(C_{2k+1}(n))-2l$ facets,
for $0 \le l \le \binom{n-k-3}{k}$.

We have that $2\binom{n-k-3}{k}> f_{2k}(C_{2k+1}(n))-f_{2k}(C_{2k+1}(n-1))$ holds for large $n$,
in particular for $d=5$ if $n\ge 9$ and for general $d$ if $n\ge 5k+1=\frac 5 2 d-\frac 3 2$. 
With the same calculations as before, we obtain polytopes with $n$ vertices and $m$ facets
for all pairs $(n,m)$ if $n$ and $m$ are even and if
\[
	n+m \ge  \frac{2}{2k-1}(4k\binom{4k-1}{k}+4k^2-5k-2).
\] 
For $d\ge 7$, this implies that
\[
	n+m\ge \binom{6k+4}{k}=\binom{3d+1}{\left\lfloor\frac{d}{2}\right\rfloor}.
\]
For $d=5$, we check that the constructions give us all polytopes with $n+m\ge 58$,
where $\binom{3\cdot 5+1}{2}>58$.
We can also construct polytopes with an odd number of facets, as long as $m\le f_{d-1}(C_d(n-1))$.
For this, we need a generalized stacking construction similar to the one described in  
Section~\ref{sec:genstacking}.
Starting with a simplicial polytope, we place a new vertex beyond one facet, inside the affine hull of 
a second facet and beneath all other facets.
The new polytope has one new (simple) vertex and $d-2$ new facets. The polytope has one facet which is a bipyramid
over a triangle. All other facets are simplices, so we can apply the inductive stacking and truncating method from before.

There are exceptional pairs $(n,m)$ if $m$ is odd and close to $f_{d-1}(C_d(n))$:
non-simplicial $d$-polytopes with $n$ vertices have at most $f_{d-1}(C_{d}(n))-\floor{\frac{d}{2}}$ facets.
This is a direct consequence of the upper bound theorem for \emph{almost simplicial polytopes}
by Nevo, Pineda-Villavicencio, Ugon \& Yost~\cite{NPVUY17}. 
These authors give upper bounds for the number of faces of  the family $\mathcal{P}(d,n,s)$ of \emph{almost simplicial polytopes},  $d$-polytopes on $n$ vertices where
one facet has $d+s\ge d+1$ vertices and all other facets  are simplices.
Such polytopes have at most 
$f_{d-1}(C_{d}(n))-\floor{\frac{d}{2}}$ facets.
(This follows  from \cite{NPVUY17}, Thm.~1.2 and Prop.~4.2.)

For any non-simplicial polytope $P$ on $n$ vertices there exists an almost-simplicial polytope on $n$ vertices
(i.e. a polytope with exactly one non-simplicial facet) that has at least as many $i$-faces as $P$:
Let $F$ be a non-simplicial facet of $P$. If we successively pull every vertex of $\ver P\backslash \ver F$ 
(in the sense of \cite{EGK}) 
and then pull every vertex $v\in \ver F$ \emph{within} the affine hull of $F$, then the resulting polytope is almost simplicial, with at least as many $i$-dimensional faces as $P$.
So the $i$-faces of non-simplicial $d$-polytopes on $n$ vertices are maximized
among the almost simplicial $d$-polytopes on $n$ vertices.
In particular, for any non-simplicial $d$-polytope $P$, 
$f_{d-1}(P)\le f_{d-1}(C_{d}(n))-\floor{\frac{d}{2}}$.
Thus, for odd $d$, odd $m$, and 
\[
	f_{d-1}(C_{d}(n))-\left\lfloor\tfrac{d}{2}\right\rfloor<m<f_{d-1}(C_{d}(n)),
\]
$(n,m)$ is an exceptional pair.

The rest of the theorem for $d$-large $(n,m)$ follows by duality. 
For $d$-small $(n,m)$, the non-constructive proof for the existence of exceptional pairs 
in the even-dimensional case works as well in the odd-dimensional case. 
It can be slightly improved:
All $d$-polytopes with $d+2$ vertices are simplicial or (multiple) pyramids over some $r$-polytope with $r+2$ vertices \cite[Sect.~6.5]{Ziegler}.
In particular, for $d=2k+1$ and odd $m$, any polytope $P$ with $d+2$ vertices and $m$ facets is a pyramid over some $(d-1)$-polytope $Q$ with $d+1$ vertices and $m-1$ facets. Hence,
\begin{align*}
m-1 = f_{d-2}(Q) \le f_{d-2}(C_{d-1}(d+1))=k^2+2k+1.
\end{align*}
Comparing this to 
\[
	f_{d-1}(C_{d}(d+2))=k^2+3k+2,
\]
we see that there are $\left\lfloor\frac{k}{2}\right\rfloor$ exceptional pairs $(n,m)$ for which there are no 
$(2k+1)$-polytopes, such that $m$ is odd and
\[
	k^2+2k+2<m<k^2+3k+2.
\]
\qed
\end{proof}

\paragraph{Remark.} 
This implies that for for odd $d$ the projection sets  $\PiFset{d}{0,d-1}{}$ have infinitely many exceptional pairs, all
of them near the boundary.
For a complete characterization of $d$-large pairs in $\PiFset{d}{0,d-1}{}$
one would need to analyze closely the possible facet numbers of non-simplicial polytopes with many facets.

For low dimensions, we can improve the bounds for the $d$-\emph{large} pairs. 
We have seen that in dimension $5$, a pair can be called $d$-large if $n+m\ge 58$.
Similarly, for dimension $6$, the bound for $d$-large pairs can be reduced to $n+m\ge 132$: 
It can be seen from the $g$-theorem that simplicial $6$-polytopes with $n$ vertices have
$5n-28$, $5n-25$, $5n-24$, or $5n-22$ to $f_5(C_6(n))$ facets.
For $n\ge 11$, it holds that $5n-22<f_5(C_6(n-1))$. 
From this, the bound $n+m\ge 132$ for $d$-large pairs can be derived. 

\newpage

\newpage

\section*{Appendix}\label{appendix}

Table~\ref{table:7to8vertices} lists all polytopes $P_i$ with $7$ and $8$ vertices from 
Table~\ref{table:polytopalpairs} used in the construction of all possible pairs $(f_0,f_{03})$.
The polytopes are given by their facet list. See Fukuda, Miyata \& Moriyama~\cite{FMMclass}
for a  complete list of 
all $31$ polytopes with $7$ vertices and all $1294$ polytopes with $8$ vertices.
Entry~$7.x$ in the last column means that the polytope can be found as the $x$th polytope listed 
in the classification of $4$-polytopes with $7$ vertices.
\begin{table}[h]
\begin{tabular}{l|l|l} 
polytope & facet list & row  \\ \hline
$P_1$ & [654321][65430][6520][6420][5310][5210][4310][4210]&  7.3 \\  \hline
$P_2$ & [65432][65431][65210][64210][5320][5310][4320][4310] &  7.21\\ \hline
$P_3$ &[65432][65431][65210][6421][5320][5310][4320][4310][4210] &  7.22\\ \hline
$P_4$ &[65432][65410][6531][6431][5420][5321][5210][4320][4310][3210] &  7.11\\ \hline
$P_5$ & [65432][6541][6531][6431][5421][5320][5310][5210][4320][4310][4210] &  7.16\\ \hline
$P_6$ & [65432][65431][6521][6420][6410][6210][5320][5310][5210][4320][4310]&  7.24\\ \hline
$P_7$ & [65432][6541][6531][6430][6410][6310][5421][5320][5310][5210][4320][4210]&  7.13\\ \hline
$P_8$ & [765432][765410][76321][75310][64210][5430][4320][3210] &  8.186\\ \hline
$P_9$ &[765432][76541][76310][75310][64210][6320][5420][5410][5320] &  8.285\\ \hline
$P_{10}$ &[76543][76542][76321][75310][75210][64310][64210][5430][5420] &  8.1145\\ \hline
$P_{11}$ & [765432][76541][76310][54310][7531][6421][6320][6210][4320][4210]&  8.241\\ \hline
$P_{12}$ & [765432][76541][76320][75310][54310][7610][6421][6210][4320][4210]&  8.353\\ \hline
$P_{13}$ &[765432][76541][73210][63210][7631][7520][7510][6420][6410][5420][5410]&  8.201\\ \hline
$P_{14}$ & [765432][76541][76310][7531][6430][6410][5420][5410][5321][5210][4320][3210] &  8.306\\ 
\hline
$P_{15}$ &[765432][76510][7641][7541][6530][6421][6321][6310][5420][5410][5320][4210]&  8.117\\ 
&[3210]& \\ \hline
$P_{16}$ &[76543][76521][76420][7542][6531][6431][6410][6210][5432][5320][5310][5210]&  8.676\\ 
&[4320][4310]& \\ \hline
$P_{17}$ &[76543][76542][73210][63210][7632][7531][7520][7510][6431][6420][6410][5431]&  8.909\\ 
&[5420][5410] & \\ \hline
$P_{18}$ &[76543][76521][7642][7542][6530][6510][6432][6320][6210][5430][5421][5410]&  8.778\\ 
&[4321][4310][3210]& \\ \hline
$P_{19}$ & [76543][76542][73210][7631][7621][7530][7520][6431][6420][6410][6210][5431]&  8.910\\ 
&[5420][5410][5310] & \\ \hline
$P_{20}$ &[76543][7652][7642][7531][7521][7431][7421][6530][6521][6510][6430][6420]&  8.805\\ 
&[6210][5310][4310][4210]& \\ \hline 
$P_{21}$ &[76543][76542][7632][7531][7521][7320][7310][7210][6431][6420][6410][6320]&  8.1227\\
&[6310][5431][5421][4210] & \\ \hline
$P_{22}$ &[7654][7653][7643][7542][7532][7431][7421][7321][6540][6530][6431][6410]&  8.1262\\ 
&[6310][5420][5320][4210][3210] & \\ \hline
$P_{23}$ &[76543][7652][7642][7531][7521][7431][7421][6530][6521][6510][6430][6420]&  8.806\\ 
&[6210][5310][4321][4320][3210] & \\ \hline
$P_{24}$ &[76543][76542][7631][7621][7531][7520][7510][7210][6430][6420][6321][6320]&  8.1041\\ 
&[5431][5420][5410][4310][3210] & \\ \hline
$P_{25}$ &[7654][7653][7643][7542][7532][7431][7421][7321][6542][6530][6520][6430][6420]&  8.1263\\ 
&[5321][5310][5210][4310][4210]& \\ \hline
$P_{26}$ &[76543][7652][7642][7541][7521][7420][7410][7210][6530][6521][6510][6432]&  8.815\\ 
&[6320][6210][5431][5310][4320][4310] & \\ \hline
$P_{27}$ &[7654][7653][7643][7542][7532][7431][7421][7321][6542][6530][6520][6431][6420]&  8.1266\\ 
&[6410][6310][5321][5310][5210][4210]& \\ 
\end{tabular}
\caption{Polytopes $P_i$ with 7 and 8 vertices}
\label{table:7to8vertices}
\end{table} 
\end{document}